\documentclass[12pt]{amsart}
\usepackage{amssymb}
\usepackage{hyperref}

\usepackage{graphicx}
\usepackage{fullpage}
\usepackage{amsthm}
\usepackage{amsmath}
\usepackage{amssymb}
\usepackage{subfigure}
\usepackage{setspace}
\usepackage{float}
\usepackage{graphicx}
\usepackage{placeins}
\usepackage{color}
\usepackage{leftidx}

\usepackage{tikz}
\usetikzlibrary{shapes, arrows, positioning, calc} 

\usepackage[utf8]{inputenc}
\usepackage{amsfonts}
\usepackage{array}
\usepackage{booktabs}
\usepackage[margin=1in]{geometry}

\newtheorem{thm}{Theorem}[section]

\newtheorem{lem}[thm]{Lemma}
\newtheorem{prop}[thm]{Proposition}
\theoremstyle{definition}
\newtheorem{defn}[thm]{Definition}
\newtheorem{example}[thm]{Example}
\theoremstyle{remark}
\newtheorem{rem}[thm]{Remark}
\numberwithin{equation}{section}

\newcommand{\Real}{\mathbb R}

\newcommand{\Complex}{\mathbb C}

\begin{document}
\title{Space-Time Duality in Relativistic Diffusion via Subordination}

\author{Cheng-Gang Li}
\address{Department of Mathematics, Southwest Jiaotong  University, Chengdu 611756, P.R.China. }
\email{lichenggang@swjtu.edu.cn}

{\renewcommand{\thefootnote}{} \footnote{2020 {\it Mathematics
Subject Classification.}  34K30,60G51,34K06, 47D06,47A60,
\\ \text{  }  \ \    {\it Key words and phrases.} relativistic diffusion, subordination principle,  Bernstein function, duality, telegraph equation
}

\begin{abstract}
The Cattaneo-Vernotte model and the relativistic Schr\"odinger operator represent two fundamental frameworks for the relativistic modifications of normal diffusion,
leading to the telegraph equation and a class of spatially nonlocal diffusion equations, respectively.
This paper investigates the intrinsic connection between these two relativistic diffusion models.
While it is well-established that spatially non-local diffusion arises from subordinating normal diffusion,
we reveal a novel reciprocal mechanism: the normal diffusion process can be recovered by subordinating the telegraph process via the corresponding inverse subordinator. Consequently, leveraging the duality between the subordinator and the inverse subordinator, we establish a distinct duality between the telegraph equation and the spatially nonlocal diffusion equation. Furthermore, this specific duality, centered on normal diffusion, is generalized to a broader class of space-time dual operator families anchored on $C_0$-semigroups.

\end{abstract}
\maketitle

 \tableofcontents

\section{Background and motivation}

It is well-known that the solution of diffusion (heat) equation
\begin{equation*}
\begin{cases}
\frac{\partial u}{\partial t}(x,t)=\Delta u(x,t), \quad t>0, x\in \mathbb{R}^n;\\
u(x,0)=u_0(x)
\end{cases}
\end{equation*}
has an infinite speed of propagation. To overcome this phenomenon, which contradicts special relativity,
various relativistic diffusion models have been proposed.
One of the most famous models among them is probably the telegraph equation (damped wave equation)
\begin{equation} \label{Cattaneo}
\frac{\partial^2 u}{\partial t^2}(x,t)+2a\frac{\partial u}{\partial t}(x,t)=\Delta u(x,t), \qquad (a>0)
\end{equation}
derived from the Cattaneo-Vernotte model
 \cite{Dunkel2007PRD,Dunkel2009PRYREP,Gaveau1984,Joseph1989, Masoliver1996, Ozisik1994}.
The solution of (\ref{Cattaneo}) has finite propagation speed,
and converges to normal diffusion as $t\rightarrow\infty$.

On the other hand, since the (free)  relativistic Schr\"odinger  operator  $-a+\sqrt{a^2-\Delta}$ was introduced to study relativistic quantum mechanics and related problems
 \cite{Applebaum2009, Carmona1990JFA,Lieb1990BAMS, Lieb2001book}, the spatially nonlocal equation
\begin{equation} \label{introduction-1}
\frac{\partial f}{\partial t}(x,t)=(a-\sqrt{a^2-\Delta})f(x,t)
\end{equation}
is regarded as the relativistic diffusion equation by some researchers because it possesses some properties that fit relativistic theory
\cite{Alonso2025PRE, Baeumer2010,Muniz2015PRD},
despite the fact that the solution of (\ref{introduction-1}) has an infinite speed of propagation.

{\it This paper attempts to establish a dual relationship between (\ref{Cattaneo}) and (\ref{introduction-1}),
in which the normal diffusion equation will serve as a bridge.} Furthermore, we will also explore their generalization and applications.

The main motivation for our ideas stems from two aspects, which will be elaborated in detail below.
We will take $x\in \mathbb{R}$ for simplicity in the remaining part of this section.

\vspace{0.5cm}

{\bf The first motivation.} Consider the following three equations:
\begin{equation} \label{introduction-WE}
\begin{aligned}
\begin{cases}
    \frac{\partial^2 w}{\partial t^2}= c^2\frac{\partial^2 w}{\partial x^2}, \quad \quad t>0, x\in \mathbb{R};\\
    w(x,0)=u_0(x), \quad w'(x,0)=0,
\end{cases}
\end{aligned}
\tag{WE}
\end{equation}

\begin{equation}\label{introduction-TE}
\begin{aligned}
\begin{cases}
    \frac{\partial^2 u}{\partial t^2}+2a\frac{\partial u}{\partial t}=c^2 \frac{\partial^2 u}{\partial x^2}, \quad \quad t>0, x\in \mathbb{R};\\
    u(x,0)=u_0(x), \quad u'(x,0)=0
\end{cases}
\end{aligned}
\tag{TE}
\end{equation}
and

\begin{equation}\label{introduction-DE}
\begin{aligned}
\begin{cases}
    \frac{\partial v}{\partial t}=c^2 \frac{\partial^2 v}{\partial x^2}, \quad \quad t>0, x\in \mathbb{R};\\
    v(x,0)=u_0(x).
\end{cases}
\end{aligned}
\tag{DE}
\end{equation}

It is well-known that there exists an integral  representation
\begin{equation}\label{introduction-weierstrass}
v(x,t)=\int_0^\infty p(s,t) w(x,s)ds, \quad    t>0, x\in \mathbb{R}
\end{equation}
in which $p(s,t)=\frac{1}{\sqrt{\pi t}} e^{-\frac{s^2}{4t}}$. See \cite{Arendt2010book, Romanov1947} for (\ref{introduction-weierstrass}) and its abstract version.
Indeed this formula is an abstract variant of the elementary convolution formula for the diffusion equation \eqref{introduction-DE},
thus (\ref{introduction-weierstrass}) and its abstract version are known as the Gauss-Weierstrass or Weierstrass formula.
The function $p(s,t)$ carries distinct probabilistic significance:
it is the probability density of the inverse $1/2$ stable subordinator  \cite{Meerschaert2019book}
or  the absolute value of one-dimensional Brownian motion $|\mathbf{B}(t)|$ \cite{Applebaum2009}.
Therefore, a physically meaningful subordinate relationship from wave process to diffusion process is established.

\begin{figure}
\centering
\subfigure[$First \quad motivation$]{\includegraphics[width=3in]{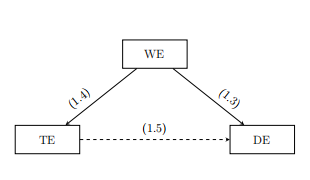}}
\hfil
\subfigure[$Second \quad motivation$]{\includegraphics[width=3in]{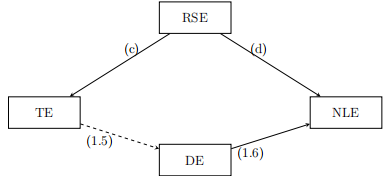}}
\caption{Panel (a) present the subordination relations (1.3), (1.4) and (1.5); In panel (b),
arrow (c) denotes the analytic continuation $m\rightarrow im$ or $\hbar\rightarrow -i\hbar$, while (d) denotes the analytic continuation $t\rightarrow it$.
Subordination relation (1.6) also is presented in Panel (b).}
\end{figure}

Next,  subordination from (\ref{introduction-WE}) to (\ref{introduction-TE}) has also been established. We have \cite{Bobrowski2005book, Dewitt1989, Kac1974}
$$
u(x,t)=\mathbb{E}[w(x, \mathbf{U}(t))]=\frac{1}{2}\mathbb{E}[u_0(x+\mathbf{U}(t))+u_0(x-\mathbf{U}(t))]
$$
where
$$
\mathbf{U}(t)=\int_0^t (-1)^{\mathbf{N}(s)}ds,
$$
and $\mathbf{N}(t) (t\geq 0)$ is a Poisson process with parameter $a$.
This classical  representation from (\ref{introduction-WE}) to (\ref{introduction-TE}) has a variant
\begin{equation}\label{introduction-Li}
u(x,t)=\int_0^{\infty} p_1(s,t)w(x,s)ds, \quad t>0, x\in \mathbb{R}
\end{equation}
in which $ p_1(s,t)$ satisfies the Laplace transform
$$
\int_0^{\infty} e^{-\lambda t}p_1(s,t)dt=\dfrac{\sqrt{\lambda^2+2a\lambda}}{\lambda}e^{-s\sqrt{\lambda^2+2a\lambda}}.
$$
Since $\sqrt{\lambda^2+2a\lambda}$ is a Bernstein function, there exists a subordinator $\mathbf{D}_1(t)$ such that
\[
\mathbb{E}(e^{-\lambda \mathbf{D}_1(t)})=e^{-t\sqrt{\lambda^2+2a\lambda} },
\]
i.e., $\sqrt{\lambda^2+2a\lambda}$ is the Laplace exponent of $\mathbf{D}_1(t)$.
Let $\mathbf{E}_1(t)$ be the inverse process of $\mathbf{D}_1(t)$, then $p_1(s,t)$ is the probability density of $\mathbf{E}_1(t)$.
See  \cite{Lichenggang2019} or Example 3.4 in Section 3 for more details.
Therefore, the expression (\ref{introduction-Li})  also possesses a favorable probabilistic interpretation.

Since we have the subordination from (\ref{introduction-WE}) to (\ref{introduction-DE}), and (\ref{introduction-WE}) to (\ref{introduction-TE}), a natural question arises:
as (\ref{introduction-TE}) lies between (\ref{introduction-WE}) and (\ref{introduction-DE}), is there any subordination relation from (\ref{introduction-TE}) to (\ref{introduction-DE})?

If such a subordination relation from (\ref{introduction-TE}) to (\ref{introduction-DE})
\begin{equation} \label{introduction-TEtoDE}
v(x,t)=\int_0^{\infty} p_2(s,t)u(x,s)ds, \quad t>0, x\in \mathbb{R}
\end{equation}
is established, where $p_2(s,t)$ is a probability density, we obtain a complete composite relationship, i.e.,
\begin{equation*}
v(x,t)=\int_0^{\infty} p_2(s,t)\int_0^{\infty} p_1(\tau,s)w(x,\tau)d\tau ds= \int_0^\infty p(s,t) w(x,s)ds, , \quad t>0, x\in \mathbb{R}
\end{equation*}
combined with the corresponding stochastic interpretation. See the flowchart (Figure~1(a)).

\vspace{0.5cm}


{\bf The second motivation.} In a seminal paper \cite{Gaveau1984}, the authors remarked:
``For the nonrelativistic electron there is a stochastic process that is intimately connected with its propagation, namely, the Wiener process or Brownian
motion. This is most clearly reflected in the connection between the Feynman path integral and the
Wiener integral.The two kinds of sum, like their underlying partial differential equations,
are related by an analytic continuation that can be implemented in a variety of ways: imaginary time, imaginary mass, and even imaginary $\hbar$.''

In the free Schr\"odinger equation
$$
i\hbar\dfrac{\partial \Psi}{\partial t}=-\dfrac{\hbar^2}{2m} \Delta \Psi,
$$
a diffusion equation
$$
\hbar\dfrac{\partial \Psi}{\partial t}=\dfrac{\hbar^2}{2m} \Delta \Psi
$$
could be obtained by utilizing any of the three analytic continuations:
$$
t\rightarrow it, \quad m\rightarrow im,  \quad \hbar\rightarrow -i\hbar.
$$


Consider the relativistic Schr\"odinger equation
$$
i\hbar\dfrac{\partial \Psi}{\partial t}=[-mc^2+\sqrt{m^2c^4-\hbar^2c^2\Delta}]\Psi.
$$
Let $a=\frac{mc^2}{\hbar}$.   Applying the analytic continuation $m\rightarrow im$ or $\hbar\rightarrow -i\hbar$, we have the telegraph equation
(\ref{introduction-TE}) where we use $u$ in place of $\Psi$.

However, once the analytic continuation $t\rightarrow it$ is employed,
Baeumer et al. obtained a spatially nonlocal equation  \cite{Baeumer2010}

\begin{equation}\label{introduction-NLE}
\begin{aligned}
\begin{cases}
    \dfrac{\partial f}{\partial t}=[a-\sqrt{a^2-c^2\Delta}]f, \quad \quad t>0, x\in \mathbb{R};\\
    f(x,0)=u_0(x)
\end{cases}
\end{aligned}
\tag{NLE}
\end{equation}
where we use $f$ in place of $\Psi$ (we will review the analytic continuation in Section 4.)

The diffusion equation (\ref{introduction-DE}) and (\ref{introduction-NLE}) are closely linked:
\begin{equation}\label{introduction-space-subordination}
f(x,t)=\int_0^\infty q(s,t) v(x,s)ds, \quad t>0, x\in \mathbb{R}
\end{equation}
where $q(s,t)$ is the probability density of a subordinator with Laplace exponent $h(\lambda)=-a+\sqrt{a^2+c^2\lambda}$.
This subordination relation can be interpreted directly with stochastic processes:
let $\mathbf{B}(t)$ be a Brownian motion with respect to (\ref{introduction-DE}), $\mathbf{D}(t)$ be a subordinator with Laplace exponent $h(\cdot)$,
 which is independent of $\mathbf{B}(t)$.
Then the probability density of subordinated process $\mathbf{B}(\mathbf{D}(t))$ is the fundamental solution of (\ref{introduction-NLE}).
In other words, $a-\sqrt{a^2-c^2\Delta}$ is the generator of the subordinated process $\mathbf{B}(\mathbf{D}(t))$ \cite{Applebaum2009}.
Once the subordination relation (\ref{introduction-TEtoDE}) is established, we obtain a chain of subordinations:
$$
\text{(TE)}\mapsto \text{(DE)}\mapsto\text{(NLE)}.
$$
 See the flowchart (Figure~1(b)).

In the aforementioned subordinate relationships, a crucial point is that we have discovered:
the function $p_2(s,t)$ in subordination  formula (\ref{introduction-TEtoDE}) can be regarded as the probability density of inverse process of subordinator
$\mathbf{D}(t)$, denoted by $\mathbf{E}(t)$. To sum up, let $\mathbf{H}(t),
\mathbf{B}(t)$ and $\mathbf{Y}(t)$ be the processes governed by (\ref{introduction-TE}),(\ref{introduction-DE}) and (\ref{introduction-NLE}), respectively,
and let $\mathbf{D}(t)$ be independent of $\mathbf{H}(t)$ and $\mathbf{B}(t)$. Then we have
$$
\mathbf{H}(t)\mapsto\mathbf{B}(t)\mapsto \mathbf{Y}(t)
$$
where $\mathbf{B}(t)=\mathbf{H}(\mathbf{E}(t))$ and $\mathbf{Y}(t)=\mathbf{B}(\mathbf{D}(t))$ in the sense of distribution.
In view of the duality between $\mathbf{D}(t)$ and $\mathbf{E}(t)$:
$\mathbf{E}(t)=\inf\{z>0:\mathbf{D}(z)>t\}$ and $P(\mathbf{E}(t)\leq z)=P(\mathbf{D}(z)\geq t)$,
one may realize that  equation (\ref{introduction-TE}) and  \eqref{introduction-NLE} have an intimate dual relation.

\vspace{1cm}
Next, we will outline the main structure of this article.

We provide in Section 2 some preliminaries on Bernstein functions, subordinators and inverse subordinators
from probability theory. In particular, some properties of Laplace exponent $\mu(\lambda)=-a+\sqrt{\lambda+a^2}$ are given.
Besides, we provide an introduction to abstract evolutionary integral equations and functional calculi,
since our main results in Sections 3 and 5 are presented in abstract form for simplicity.

Then in Section 3, we give an abstract subordination representation from \eqref{introduction-TE} to \eqref{introduction-DE}. Several applications are also illustrated.

Based on the subordination relation between  \eqref{introduction-TE}, \eqref{introduction-DE} and \eqref{introduction-NLE},
we will explain the duality between  \eqref{introduction-TE} and \eqref{introduction-NLE}
from two aspects in Section 4.

Finally,  in Section 5, we extend the previous dual relativistic diffusion relation to a more general case with subordinator theory.
We also apply the abstract results to some  equations and operators.
For instance, we will construct the time-dual equation (for some specific $\beta$) of
\begin{equation*}
\dfrac{\partial f}{\partial t}=[a-(a^{2/\beta}-\Delta)^{\beta/2}]f,   \quad \beta\in(0,2)
\end{equation*}
derived from relativistic stable processes.

\vspace{1cm}

\section{Preliminaries}

In this section, we will introduce some fundamental tools from probability theory and functional analysis,
and fix several notations that will be used throughout this paper.

\subsection{Bernstein function, subordinator and inverse subordinator}

Firstly, we introduce the theory of Bernstein function and subordinator \cite{Applebaum2009,Bertoin1999,Schilling2012}.

\begin{defn}
Let a function $f: (0, +\infty)\rightarrow \mathbb{R}$ be of class $C^\infty$, i.e., infinitely differentiable.
$f$ is  a {\it completely monotone function} if
$$
(-1)^n f^{(n)}(\lambda)\geq 0
$$
for all $n=0,1,2,\cdots$ and $\lambda >0$. Moreover, $f$ is a {\it Bernstein function} if $f\geq0$ and $f'$ is completely monotone function.
\end{defn}
We will use $\mathcal{CMF}$ and $\mathcal{BF}$ to denote completely monotone function and Bernstein function respectively.
It is easy to see both
$e^{-a\lambda^\alpha}$$(a>0, 0\leq \alpha\leq 1)$ and $\lambda^{-\alpha}$ $(\alpha>0)$ are $\mathcal{CMF}$, while
$\lambda^{\alpha}$  $(0<\alpha\leq 1)$, $1-e^{-\lambda}$ and $\ln(1+\lambda)$ are  $\mathcal{BF}$.

The following two representation theorems for $\mathcal{CMF}$ and $\mathcal{BF}$ play crucial role
in the probability correspondence of these function class.

\begin{prop} \label{LK bernstein function}
(1) Let $f$ be $\mathcal{CMF}$.
Then it is the Laplace transform of a unique (non-negative) measure $\nu$ on $[0,+\infty)$, i.e., for all $\lambda >0$,
$$
f(\lambda)=\mathcal{L} (\nu;\lambda)=\int_{[0,\infty)}e^{-\lambda t}\nu(dt)
$$
Conversely, whenever $ \mathcal{L}(\nu;\lambda)<\infty$ for every $\lambda >0$, $ \lambda \rightarrow\mathcal{L}(\nu;\lambda)$ is
$\mathcal{CMF}$.

(2) A function $f: (0, +\infty)\rightarrow (0, +\infty)$ is  $\mathcal{BF}$, if and only if it
admits the representation
$$
f(\lambda)=a+b\lambda+\int_{(0,\infty)}(1-e^{-\lambda t})\sigma(dt)
$$
where $a,b\geq 0$ and $\sigma$ is a measure on $(0,\infty)$ satisfying
$\int_{(0,\infty)}(1\wedge t)\sigma(dt)<\infty$.

In particular, the triplet $(a,b,\sigma)$ determines $f$ uniquely and vice versa.
We will call $\sigma$ the L{\'e}vy measure.
\end{prop}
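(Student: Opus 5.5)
The plan is to prove (1), which is Bernstein's theorem, by a Hausdorff-type moment argument plus Helly selection, and then to derive (2) from (1) by integrating the derivative.

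\textbf{Part (1), converse direction.} Suppose $\mathcal{L}(\nu;\lambda)<\infty$ for every $\lambda>0$. On $\lambda\ge\lambda_0>0$ the bound $t^ne^{-\lambda t}\le C_n e^{-\lambda_0 t/2}$ allows differentiation under the integral sign by dominated convergence. This gives
$$(-1)^n f^{(n)}(\lambda)=\int t^n e^{-\lambda t}\,\nu(dt)\ge 0 .$$

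\textbf{Part (1), forward direction.} Let $f\in\mathcal{CMF}$.
\begin{itemize}
\item First reduce to $f$ bounded near $0$. Replace $f$ by $f_\epsilon(\lambda)=f(\lambda+\epsilon)$, which is still $\mathcal{CMF}$ and has $f_\epsilon(0)=f(\epsilon)<\infty$.
\item Use the Taylor formula with integral remainder at a point $M$, letting $M\to\infty$. Since all terms $(-1)^k f^{(k)}$ are nonnegative, one obtains the Post--Widder-type representation
$$f(\lambda)=f(\infty)+\lim_{n\to\infty}\int_0^\infty \Big(1-\frac{\lambda t}{n}\Big)_+^{n}\,\nu_n(dt).$$
Here $\nu_n$ is the positive measure
$$\nu_n(dt)=\frac{(-1)^n f^{(n)}(n/t)\,(n/t)^{n}}{(n-1)!}\,\frac{dt}{t},$$
and its total mass is bounded by $f(0)-f(\infty)$.
\item By Helly's selection theorem, a subsequence $\nu_{n_k}$ converges vaguely to some measure $\nu'$.
\item Since $(1-\lambda t/n)_+^n\to e^{-\lambda t}$ uniformly on $[0,\infty)$, pass to the limit. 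With $\nu=f(\infty)\delta_0+\nu'$ this gives $f=\mathcal{L}(\nu;\cdot)$.
\item Letting $\epsilon\downarrow0$ and using consistency, $e^{\epsilon t}\nu_\epsilon$ gives one measure independent of $\epsilon$.
\end{itemize}
Uniqueness follows because the functions $e^{-\lambda t}$ separate measures: by Stone--Weierstrass they are dense in $C_0$ on the compactification $[0,\infty]$.

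\textbf{Part (2).} Let $f\in\mathcal{BF}$. Then $f'$ is $\mathcal{CMF}$, so by (1) $f'(\lambda)=\int_{[0,\infty)}e^{-\lambda t}\mu(dt)$. Set $b=\mu(\{0\})$ and $\sigma(dt)=t^{-1}\mu(dt)$ on $(0,\infty)$. Since $f\ge0$ is increasing, $a:=f(0+)$ exists and is nonnegative. Integrating from $0$ to $\lambda$ and applying Tonelli gives
$$f(\lambda)=a+b\lambda+\int_{(0,\infty)}\frac{1-e^{-\lambda t}}{t}\,\mu(dt),$$
which is the claimed representation with the Lévy measure $\sigma$.

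Finiteness of $\int(1\wedge t)\,\sigma(dt)$ comes from $f(1)<\infty$ together with $1-e^{-t}\asymp 1\wedge t$.

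For the converse, differentiate the representation under the integral, dominated by $t\,e^{-\lambda t}\le C(1\wedge t)$ for $\lambda\ge\lambda_0$. This yields $f'=b+\int t e^{-\lambda t}\sigma(dt)$, which is $\mathcal{CMF}$ by (1).

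Uniqueness of $(a,b,\sigma)$ reduces to uniqueness of $\mu$ in (1), since $a=f(0+)$ and $b=\lim_{\lambda\to\infty}f(\lambda)/\lambda$.

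\textbf{Main obstacle.} The hard step is the existence part of (1). The Taylor-remainder estimate must be arranged so that the $\nu_n$ have uniformly bounded mass, and one must justify exchanging the vague limit with the integral, which requires control of tightness at $t=\infty$. I would handle $t=\infty$ by working on the compactification $[0,\infty]$, where the mass at $\infty$ contributes $0$ because $e^{-\lambda\infty}=0$ for $\lambda>0$.
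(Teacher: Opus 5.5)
The paper does not prove this proposition; it quotes it from the literature (\cite{Schilling2012}). Your argument is correct and is essentially the standard textbook proof given there: Taylor's formula with integral remainder, Helly selection on $[0,\infty]$, the shift $f(\cdot+\epsilon)$ glued together by uniqueness, and then integrating $f'$ for part (2).

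There are two small points to tighten. First, with your $\nu_n$ the kernel that comes out of the remainder is exactly $(1-\lambda t/n)_+^{n-1}$. Exponent $n$ only agrees in the limit, which is harmless because the masses are bounded.

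Second, the passage $M\to\infty$ requires $(-1)^k f^{(k)}(M)(M-\lambda)^k\to 0$ for every $k\ge 1$. This does not follow from nonnegativity of the Taylor terms alone; without it you only get $f(\lambda)\ge\mathcal{L}(\nu;\lambda)$. It does follow from the monotonicity of $(-1)^k f^{(k)}$ together with the bound $\int_\lambda^\infty (-1)^k f^{(k)}(s)(s-\lambda)^{k-1}\,ds\le (k-1)!\,f(\lambda)$, which is the order-$k$ Taylor formula itself.
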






The following are some operation properties of $\mathcal{CMF}$ and $\mathcal{BF}$, see \cite{Schilling2012}.
\begin{prop} \label{operation properties}
\begin{itemize}
\item[(i)]
 If $f$ and $g$ are $\mathcal{CMF}$, then $f+g$ and $fg$ are $\mathcal{CMF}$.
\item[(ii)]
 If $f$ and $g$ are $\mathcal{BF}$, then $f+g$ and $f(g)$ are $\mathcal{BF}$.
\item[(iii)]
 If $f$ is $\mathcal{CMF}$ and $g$ is $\mathcal{BF}$, then $f(g)$ is $\mathcal{CMF}$.
\item[(iv)]
 If $f$ is $\mathcal{BF}$, then $f(\lambda)/\lambda$ is $\mathcal{CMF}$.
\end{itemize}
\end{prop}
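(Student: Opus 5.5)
We verify each item directly from the definitions, using Proposition \ref{LK bernstein function} wherever a representation makes the argument cleaner.

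For (i), additivity is immediate from linearity of differentiation. For the product we use the Leibniz rule:
\[
(-1)^n (fg)^{(n)}=\sum_{k=0}^n\binom{n}{k}\bigl((-1)^k f^{(k)}\bigr)\bigl((-1)^{n-k} g^{(n-k)}\bigr),
\]
and every term on the right is nonnegative. Alternatively, by Proposition \ref{LK bernstein function}(1), $fg$ is the Laplace transform of the convolution $\nu_f*\nu_g$.

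For (ii), the sum $f+g$ is nonnegative and $(f+g)'=f'+g'$ is $\mathcal{CMF}$ by (i). The composition is handled after (iii). Since $(f\circ g)'=f'(g)\,g'$, with $f'$ in $\mathcal{CMF}$ and $g'$ in $\mathcal{CMF}$, item (iii) shows $f'(g)$ is $\mathcal{CMF}$, and then (i) shows the product is $\mathcal{CMF}$. Nonnegativity of $f\circ g$ is clear.

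For (iii), the main step, we argue by induction on $n$. We prove the stronger claim that for every $\mathcal{CMF}$ $f$ and every $n$, one has $(-1)^n (f\circ g)^{(n)}\ge 0$. Write $h=f\circ g$, so that $-h'=(-f')(g)\cdot g'$. Here $-f'$ is $\mathcal{CMF}$ and $g'$ is $\mathcal{CMF}$. By the induction hypothesis applied to the $\mathcal{CMF}$ function $-f'$, the quantity $(-1)^k\bigl((-f')\circ g\bigr)^{(k)}$ is nonnegative for all $k\le n-1$. Applying Leibniz to the product then gives
\[
(-1)^{n-1}(-h')^{(n-1)}\ge 0,\quad\text{that is,}\quad (-1)^n h^{(n)}\ge0.
\]
The induction is on $n$ simultaneously over all $\mathcal{CMF}$ $f$, which is why the claim is stated for every such $f$. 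A cleaner alternative uses Proposition \ref{LK bernstein function}(1): write $f(g(\lambda))=\int e^{-g(\lambda)t}\nu(dt)$. It then suffices to show that $e^{-tg}$ is $\mathcal{CMF}$ for each $t\ge0$, which follows from the same induction applied to $e^{-s}$, and differentiation under the integral is justified by monotone convergence.

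For (iv), by Proposition \ref{LK bernstein function}(2) we have
\[
\frac{f(\lambda)}{\lambda}=\frac a\lambda+b+\int_{(0,\infty)}\frac{1-e^{-\lambda t}}{\lambda}\,\sigma(dt),
\qquad \frac{1-e^{-\lambda t}}{\lambda}=\int_0^t e^{-\lambda s}\,ds.
\]
The last expression is a Laplace transform, hence $\mathcal{CMF}$, and $a/\lambda=\int_0^\infty a\,e^{-\lambda s}\,ds$ is also a Laplace transform. By Fubini, $f(\lambda)/\lambda$ is therefore the Laplace transform of the measure $b\delta_0+\bigl(a+\sigma((s,\infty))\bigr)\,ds$. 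Its transform is finite for $\lambda>0$ because $\int(1\wedge t)\,\sigma(dt)<\infty$, so Proposition \ref{LK bernstein function}(1) gives the conclusion.

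The expected obstacle is organizing the induction in (iii) correctly, so that the Leibniz argument closes; the order of proof is (i), (iii), (ii), (iv).
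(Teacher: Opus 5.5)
The paper gives no proof of this proposition; it only cites \cite{Schilling2012}. Your proposal is correct and complete, and it is essentially the standard argument from that reference:
\begin{itemize}
\item[(i)] the Leibniz rule for products;
\item[(iii)] induction on $n$ through $-(f\circ g)'=(-f')(g)\,g'$, with the claim quantified over all $\mathcal{CMF}$ $f$ so the hypothesis can be applied to $-f'$;
\item[(ii)] the chain rule combined with (iii) and (i) for the composition;
\item[(iv)] the Tonelli identity $f(\lambda)/\lambda=a/\lambda+b+\int_0^\infty e^{-\lambda s}\sigma((s,\infty))\,ds$.
\end{itemize}
What you gain over the paper is only self-containment, and it costs nothing.

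Two small points of hygiene, neither a gap:
\begin{itemize}
\item[(a)] In (ii) and (iii) you need $g>0$ on $(0,\infty)$ so that $f\circ g$ is defined there. This is the convention of Proposition \ref{LK bernstein function}(2); a $\mathcal{BF}$ vanishing at one point vanishes identically by its representation.
\item[(b)] In your alternative route for (iii), differentiating under $\int\cdot\,\nu(dt)$ is justified by dominated convergence, not monotone convergence. For $\lambda\in[\lambda_1,\lambda_2]$, the $n$-th $\lambda$-derivative of $e^{-tg(\lambda)}$ is bounded by $C\sum_{k=1}^n t^k e^{-tg(\lambda_1)}$. This bound is $\nu$-integrable because $\int t^k e^{-st}\nu(dt)=(-1)^k f^{(k)}(s)<\infty$ at $s=g(\lambda_1)>0$.
\end{itemize}

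One side observation: your induction in (iii) uses only the signs of $g',\dots,g^{(n)}$ to control $(f\circ g)^{(n)}$. Run order by order, it therefore also proves Lemma \ref{Section 2-Bernstein-new-result} via Leibniz, instead of the Fa\`a di Bruno sign count used there.
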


The next result, new and of independent interest, will be useful in Section 5.

\begin{lem}\label{Section 2-Bernstein-new-result}
Suppose functions $f(\cdot),\varphi(\cdot): (0,+\infty)\rightarrow (0,+\infty)$ are infinitely differentiable,
$f$ is $\mathcal{CMF}$, $\varphi'(\lambda)=f(\varphi(\lambda))$, then $\varphi$ is $\mathcal{BF}$.

\end{lem}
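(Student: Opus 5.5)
Since $\varphi>0$ by hypothesis, we only need to show that $\varphi'$ is $\mathcal{CMF}$, that is,
$$(-1)^n\varphi^{(n+1)}(\lambda)\geq 0\quad\text{for all } n\geq 0,\ \lambda>0.$$
We argue by induction on $n$, using the ODE $\varphi'=f\circ\varphi$ to push derivatives of $\varphi$ onto derivatives of $f$.

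The base case $n=0$ is immediate: $\varphi'(\lambda)=f(\varphi(\lambda))\geq 0$ because $f\geq 0$. So $\varphi$ is nondecreasing and positive, and $f(\varphi(\lambda))$ is defined for all $\lambda>0$.

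For the inductive step, differentiate the identity $\varphi'=f(\varphi)$ repeatedly. By Faà di Bruno's formula,
$$\varphi^{(n+1)}=\sum_{k=1}^{n} f^{(k)}(\varphi)\,B_{n,k}\bigl(\varphi',\varphi'',\dots,\varphi^{(n-k+1)}\bigr),$$
where $B_{n,k}$ are the partial Bell polynomials, which have nonnegative coefficients. Each monomial in $B_{n,k}$ has the form $\prod_j \varphi^{(m_j)}$ with exactly $k$ factors, each $m_j\geq 1$, and $\sum_j m_j=n$.

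Now track signs. By the induction hypothesis (applied to orders $m_j\leq n$), the sign of $\varphi^{(m_j)}$ is $(-1)^{m_j-1}$. Hence the monomial has sign
$$(-1)^{\sum_j (m_j-1)}=(-1)^{n-k}.$$
Since $f$ is $\mathcal{CMF}$, the sign of $f^{(k)}(\varphi)$ is $(-1)^k$. Each term in the sum therefore has sign $(-1)^{n-k}(-1)^k=(-1)^n$. Summing terms of a common sign with nonnegative coefficients gives
$$(-1)^n\varphi^{(n+1)}\geq 0,$$
which closes the induction.

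The only point requiring care is the sign bookkeeping in the Faà di Bruno expansion: checking that each Bell monomial of degree $k$ and weight $n$ contributes exactly the factor $(-1)^{n-k}$. The weighted-degree count $\sum_j m_j=n$ over $k$ factors settles this. One could instead use the composition rule Proposition~\ref{operation properties}(iii) ($\mathcal{CMF}$ composed with $\mathcal{BF}$ is $\mathcal{CMF}$), but that would be circular since $\varphi$ is not yet known to be $\mathcal{BF}$. The inductive sign argument avoids this.
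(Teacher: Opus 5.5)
Your proof is correct and follows essentially the same route as the paper: strong induction on the order of the derivative, applying Fa\`a di Bruno's formula to $\varphi'=f\circ\varphi$, and using the constraints $\sum_j m_j=n$ and $k$ factors to get the sign $(-1)^k(-1)^{n-k}=(-1)^n$ for every term. Your bookkeeping via the weak inequalities $(-1)^{m-1}\varphi^{(m)}\ge 0$ is slightly more careful than the paper's use of $\operatorname{sgn}$, which would be inaccurate if some derivative of $\varphi$ happened to vanish.
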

\begin{proof}
We will prove the conclusion by induction combining with Fa{\'a} di Bruno's formula.

It is easy to see
$$
\varphi(\lambda)\geq0, \varphi'(\lambda)\geq0.
$$

Suppose we have shown
$$
\operatorname{sgn}(\varphi^{(n)}(\lambda))=(-1)^{n+1}, \quad n=1,2,\cdots, m.
$$

Then Fa{\'a} di Bruno's formula suggests that
\begin{eqnarray*}
&&\frac{d^{m+1}}{d\lambda^{m+1}} \varphi(\lambda) =\frac{d^{m}}{d\lambda^{m}} \varphi'(\lambda)  =\frac{d^{m}}{d\lambda^{m}}  f(\varphi(\lambda)) \\
&=&\sum \frac{m!}{b_1!b_2!\cdots b_m!} f^{(k)}(\varphi(\lambda))\left(\frac{\varphi'(\lambda)}{1!}\right)^{b_1}
\left(\frac{\varphi''(\lambda)}{2!}\right)^{b_2}\cdots\left(\frac{\varphi^{(m)}(\lambda)}{m!}\right)^{b_{m}}
\end{eqnarray*}
in which three conditions are satisfied:

(1) $b_1, b_2, \cdots, b_m$ are nonnegative integers;

(2) $b_1+2b_2+ \cdots mb_m=m$;

(3) $b_1+b_2+ \cdots b_m=k$.

Then
\begin{eqnarray*}
&&\operatorname{sgn}\left[ f^{(k)}(\varphi(\lambda))\left(\varphi'(\lambda)\right)^{b_1}\left(\varphi''(\lambda)\right)^{b_2}\cdots\left(\varphi^{(m)}(\lambda)\right)^{b_{m}}\right]\\
&=&(-1)^k \cdot\left((-1)^{1+1}\right)^{b_1}\cdot\left((-1)^{2+1}\right)^{b_2}\cdots\left((-1)^{m+1}\right)^{b_m} \\
&=&(-1)^k \cdot (-1)^{b_1+b_2+ \cdots b_m+b_1+2b_2+ \cdots mb_m}\\
&=&(-1)^m.
\end{eqnarray*}

Therefore
$$
\operatorname{sgn} \left(\varphi^{(m+1)}(\lambda)\right) =(-1)^{m+2},
$$
hence $\varphi$ is $\mathcal{BF}$.

\end{proof}

\vspace{0.5cm}

Next, we turn to the probability theory.

\begin{defn} A {\it subordinator} $\mathbf{D}(s)$
is a one-dimensional L{\'e}vy process taking values in $[0,\infty)$ that is nondecreasing (a.s.) and starts at origin (a.s.).
\end{defn}

As a special L{\'e}vy process, subordinator has a simple form in L{\'e}vy-Khintchine representation.

\begin{thm}\label{LK subordinator}
If $\mathbf{D}(t)$ is a subordinator, then the Laplace transform of the distribution is
$$
\mathbb{E}(e^{-\lambda \mathbf{D}(t)})=e^{-t\varphi(\lambda)},
$$
and the Laplace exponent $\varphi$ takes the form
\begin{equation}\label{LK formula subordinator}
\varphi(\lambda)=b\lambda+\int_{(0,\infty)}(1-e^{-\lambda s})\sigma(ds)
\end{equation}
where $b\geq 0$ and the measure $\sigma$ satisfies the condition as in Proposition \ref{LK bernstein function}.

Conversely, any mapping from $\Real\rightarrow\Complex$ of the form \eqref{LK formula subordinator} is a Laplace exponent of a subordinator.
\end{thm}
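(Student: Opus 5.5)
The plan is to prove the two directions separately. The first direction rests on the semigroup structure and Bernstein's theorem (Proposition \ref{LK bernstein function}). The converse uses the construction of a L\'evy process from a triplet, namely the L\'evy--It\^o construction via a Poisson random measure.

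\emph{Direct part.} Let $\mathbf{D}(t)$ be a subordinator.

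Since $\mathbf{D}(t)\ge 0$, the function $g_t(\lambda)=\mathbb{E}(e^{-\lambda\mathbf{D}(t)})$ is finite and lies in $(0,1]$ for $\lambda>0$. Stationarity and independence of increments give $g_{t+s}=g_tg_s$. Right-continuity in probability of $\mathbf{D}$ at $0$ gives $g_t(\lambda)\to 1$ as $t\downarrow 0$.

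Standard Cauchy functional equation arguments, using monotonicity in $t$ (since $\mathbf{D}$ is nondecreasing, $g_t$ is nonincreasing in $t$), then yield $g_t(\lambda)=e^{-t\varphi(\lambda)}$ with $\varphi(\lambda)=-\log g_1(\lambda)\ge 0$.

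Next I show that $\varphi$ is $\mathcal{BF}$. For each $n$, $\mathbf{D}(1)$ is a sum of $n$ i.i.d. nonnegative variables, so $g_1^{1/n}=e^{-\varphi/n}$ is the Laplace transform of a probability measure on $[0,\infty)$. Hence $e^{-\varphi/n}$ is $\mathcal{CMF}$ by Proposition \ref{LK bernstein function}(1). Then
$$\varphi(\lambda)=\lim_{n\to\infty} n\bigl(1-e^{-\varphi(\lambda)/n}\bigr).$$
Each $n(1-e^{-\varphi/n})$ is nonnegative with completely monotone derivative, so it is $\mathcal{BF}$.

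The step I expect to be the main obstacle is passing to the limit while preserving the Bernstein property. I would handle it as follows. Write
$$n\bigl(1-e^{-\varphi/n}\bigr)=\int(1-e^{-\lambda s})\,n\mu_n(ds),$$
where $\mu_n$ is the law of $\mathbf{D}(1/n)$. Show that the measures $(1\wedge s)\,n\mu_n(ds)$ are tight and have bounded mass; this uses the bound $1\wedge s\le C(1-e^{-s})$ together with the convergence at $\lambda=1$. Extract a vaguely convergent subsequence on $[0,\infty]$. The mass escaping to $s=0$ produces the drift $b\lambda$, and the mass at $s=\infty$ would produce a killing constant. That constant vanishes because $\varphi(0+)=0$, which follows from $\mathbf{D}(1)<\infty$ a.s.

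This yields \eqref{LK formula subordinator} with $a=0$. Uniqueness of the triplet in Proposition \ref{LK bernstein function}(2) identifies $b$ and $\sigma$.

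\emph{Converse.} Given $b\ge 0$ and $\sigma$ with $\int(1\wedge s)\sigma(ds)<\infty$, let $N$ be a Poisson random measure on $(0,\infty)\times(0,\infty)$ with intensity $dt\otimes\sigma$. Set
$$\mathbf{D}(t)=bt+\int_{(0,t]\times(0,\infty)} s\,N(du,ds).$$
The integrability condition makes this finite a.s. To see this, split into small jumps $s\le 1$, which have finite mean $t\int_{(0,1]}s\,\sigma(ds)$, and big jumps $s>1$, which are finitely many on bounded intervals since $\sigma((1,\infty))<\infty$.

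The resulting process is nondecreasing, starts at $0$, and has independent stationary increments inherited from $N$. Campbell's formula gives
$$\mathbb{E}\,e^{-\lambda\mathbf{D}(t)}=\exp\Bigl(-t\bigl[b\lambda+\int(1-e^{-\lambda s})\sigma(ds)\bigr]\Bigr).$$
Thus every map of the form \eqref{LK formula subordinator} is the Laplace exponent of a subordinator.
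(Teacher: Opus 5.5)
The paper gives no proof of this theorem. It quotes it as the standard L\'evy--Khintchine formula for subordinators, with pointers to Applebaum, Bertoin and Schilling--Song--Vondra\v{c}ek, so there is no in-paper argument to compare against. Your proposal is correct and is essentially the textbook proof.

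In the direct part, three steps together give a self-contained proof that $\mathcal{BF}$ is closed under pointwise limits: the representation $n(1-e^{-\varphi/n})=\int(1-e^{-\lambda s})\,n\mu_n(ds)$, the uniform bound $\int(1\wedge s)\,n\mu_n(ds)\le (1-e^{-1})^{-1}\varphi(1)$, and the weak limit on the compact space $[0,\infty]$. That limit is tested against the continuous extension of $s\mapsto(1-e^{-\lambda s})/(1\wedge s)$, which equals $\lambda$ at $0$ and $1$ at $\infty$. Closure under pointwise limits is exactly what Propositions \ref{LK bernstein function} and \ref{operation properties} do not supply, so you are right to prove it. Uniqueness of the triplet makes the choice of subsequence irrelevant. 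Your removal of the killing term via $\varphi(0+)=-\log P(\mathbf{D}(1)<\infty)=0$ is sound. Since the limit argument produces the representation directly, your intermediate remark that each $n(1-e^{-\varphi/n})$ is $\mathcal{BF}$ is not needed.

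In the converse, add one line checking the remaining L\'evy-process requirements. The paths are nondecreasing and right-continuous, hence c\`adl\`ag. Stochastic continuity follows from your Laplace transform formula, since $e^{-h\varphi(\lambda)}\to 1$ as $h\downarrow 0$.

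If you prefer a converse that stays within the tools of Section 2, there is a shorter route. A function $\varphi$ of the form \eqref{LK formula subordinator} is $\mathcal{BF}$, so $e^{-t\varphi}$ is $\mathcal{CMF}$ by Proposition \ref{operation properties}(iii), and it tends to $1$ as $\lambda\downarrow 0$. By Proposition \ref{LK bernstein function}(1) it is then the Laplace transform of a probability measure $\mu_t$ on $[0,\infty)$. The $\mu_t$ form a convolution semigroup, and Kolmogorov's extension theorem plus a c\`adl\`ag modification produces the subordinator. That route is shorter given the paper's preliminaries. Your Poisson construction is more explicit and exhibits $b$ and $\sigma$ directly as drift and jump intensity.
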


\begin{rem}\label{relation}
It is easy to see there is a one-to-one correspondence between $\mathcal{BF}$ functions satisfying
$\lim_{\lambda\rightarrow 0+}\varphi(\lambda)=0$ and subordinators \cite{Applebaum2009}:
given any $\mathcal{BF}$ $\varphi(\cdot)$ satisfying $\lim_{\lambda\rightarrow 0+}\varphi(\lambda)=0$,
we can associate it with a subordinator $\mathbf{D}_{\varphi}(t)$ with  Laplace exponent $\varphi(\cdot)$;
inversely, given any subordinator $\mathbf{D}_{\varphi}(t)$, we can conclude that the Laplace exponent $\varphi$ is a
$\mathcal{BF}$ for which $\lim_{\lambda\rightarrow 0+}\varphi(\lambda)=0$.

The condition $\varphi(0+)=\lim_{\lambda\rightarrow 0+}\varphi(\lambda)=0$ always holds in this paper.
\end{rem}

Throughout this paper, we will use $\mathbf{D}_{\varphi}(t)$ to denote the subordinator with Laplace exponent $\varphi(\cdot)$. Define the first passage time of $\mathbf{D}_{\varphi}(t)$ as
\begin{equation}\label{inverse subordinator definition}
\mathbf{E}_{\varphi}(t)=\inf\{z>0: \mathbf{D}_{\varphi}(z)>t\}.
\end{equation}
We will also call $\mathbf{E}_{\varphi}(t)$ the {\it inverse subordinator} with  Laplace exponent $\varphi(\cdot)$.
Further, we always use $q_{\varphi}(s,t)$ and $p_{\varphi}(s,t)$ to denote  the probability density (Lebesgue density) of
$\mathbf{D}_{\varphi}(t)$ and $\mathbf{E}_{\varphi}(t)$ respectively, if these densities exist.



From \eqref{inverse subordinator definition}, we have $P(\mathbf{E}_{\varphi}(t)\le s)=P(\mathbf{D}_{\varphi}(s)\ge t)$.
For simplicity and brevity, we assume throughout this paper (except in the general abstract Definition~\ref{Section-5-definition-1}) that the L{\'e}vy measure of $\varphi(\cdot)$ satisfies $\sigma(0,\infty)=\infty$.
Consequently, for almost every $t>0$ (i.e., possibly excluding a set of $t$ of Lebesgue measure zero), $\mathbf{E}_{\varphi}(t)$ admits a Lebesgue density $p_{\varphi}(s,t)$; see \cite[Lemma 2.1]{ChenZhenQing2020JFA} or \cite[Theorem 3.1]{Meerschaert2008}.

It is easy to see all the following functions are $\mathcal{BF}$:
\begin{eqnarray}\label{Four Laplace exponent}
\begin{aligned}
\mu(\lambda)&=-a+\sqrt{\lambda+a^2}, \\
\eta(\lambda)&=\sqrt{\lambda^2+2a\lambda}, \\
\zeta(\lambda)&=\sqrt{\lambda}, \\
\rho(\lambda)&=-a+(\lambda+a^{2/\beta})^{\beta/2}
\end{aligned}
\end{eqnarray}
where $\lambda>0, a\geq 0, \beta\in(0,2)$. Moreover, all the L{\'e}vy measures of these Laplace exponents satisfy $\sigma(0,\infty)=\infty$.
The notations for these Laplace exponents are fixed throughout this paper.

The following result is widely presented in various references.
We give a short proof for reader's convenience. See \cite{Meerschaert2019book} for the case of $\varphi(\lambda)=\lambda^\alpha (0<\alpha<1)$.

\begin{prop}
The Laplace transform of $p_{\varphi}(s,t)$ with respect to $t$ is
\begin{equation}\label{Laplace transform of inverse subordinator-1}
\int_0^{+\infty} e^{-\lambda t} p_{\varphi}(s,t)dt=\dfrac{{\varphi}(\lambda)}{\lambda}e^{-s{\varphi}(\lambda)}.
\end{equation}
\end{prop}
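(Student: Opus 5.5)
The plan is to start from the duality $P(\mathbf{E}_{\varphi}(t)\le s)=P(\mathbf{D}_{\varphi}(s)\ge t)$ and compute the Laplace transform in $t$ of the distribution function of $\mathbf{E}_{\varphi}(t)$. Differentiating the result in $s$ will then give the transform of the density.

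Fix $s>0$ and set $F(s,t)=P(\mathbf{E}_{\varphi}(t)\le s)=1-P(\mathbf{D}_{\varphi}(s)<t)$. For $\lambda>0$ we have
\[
\int_0^\infty e^{-\lambda t}P(\mathbf{D}_{\varphi}(s)< t)\,dt=\mathbb{E}\int_{\mathbf{D}_{\varphi}(s)}^\infty e^{-\lambda t}\,dt=\frac{1}{\lambda}\mathbb{E}\big(e^{-\lambda \mathbf{D}_{\varphi}(s)}\big)=\frac{1}{\lambda}e^{-s\varphi(\lambda)}.
\]
This uses Fubini/Tonelli, since the integrand is nonnegative, together with Theorem \ref{LK subordinator}. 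It follows that
\[
\int_0^\infty e^{-\lambda t}F(s,t)\,dt=\frac{1}{\lambda}\big(1-e^{-s\varphi(\lambda)}\big).
\]

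Next, write $F(s,t)=\int_0^s p_{\varphi}(r,t)\,dr$ for a.e.\ $t$. Here we use the standing assumption $\sigma(0,\infty)=\infty$, which gives the existence of the density. We also need $P(\mathbf{E}_{\varphi}(t)=0)=0$ for $t>0$: this holds because $\mathbf{D}_{\varphi}$ starts at $0$ and, having no drift-free jump at time $0$, satisfies $\mathbf{D}_{\varphi}(0+)=0$ a.s. Tonelli then gives
\[
\int_0^s\Big(\int_0^\infty e^{-\lambda t}p_{\varphi}(r,t)\,dt\Big)dr=\frac{1}{\lambda}\big(1-e^{-s\varphi(\lambda)}\big)\quad\text{for all } s>0.
\]
The right-hand side is absolutely continuous in $s$ with derivative $\frac{\varphi(\lambda)}{\lambda}e^{-s\varphi(\lambda)}$. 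Hence the inner Laplace transform equals $\frac{\varphi(\lambda)}{\lambda}e^{-r\varphi(\lambda)}$ for a.e.\ $r$. This is \eqref{Laplace transform of inverse subordinator-1}, and it holds for every $r$ once $p_{\varphi}$ is chosen jointly measurable with this continuous version.

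The only delicate point is the final differentiation step. Identifying the integrand from its integrals over $[0,s]$ only gives equality for a.e.\ $s$. It also requires that the density $p_{\varphi}(r,t)$ be jointly measurable, so that Tonelli applies. I would handle this by taking $p_{\varphi}$ to be the jointly measurable density of the measure $P(\mathbf{E}_{\varphi}(t)\in dr)\,dt$ on $(0,\infty)^2$. The identity is then read as an equality of functions of $s$ almost everywhere, which is the sense used later in the paper's subordination integrals. The rest is routine.
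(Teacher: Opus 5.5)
Your proof is correct and follows the paper's route: the duality $P(\mathbf{E}_{\varphi}(t)\le s)=P(\mathbf{D}_{\varphi}(s)\ge t)$, then Fubini in $t$ to get $\frac{1}{\lambda}e^{-s\varphi(\lambda)}$, then differentiation in $s$. Your version is more careful than the paper's: the paper moves $\frac{d}{ds}$ inside the $t$-integral without justification and uses a density $q_\varphi$ of $\mathbf{D}_\varphi(s)$, whereas you work with $\mathbb{E}\big(e^{-\lambda \mathbf{D}_{\varphi}(s)}\big)$ directly and recover the density by integrating in $s$ (Tonelli) and applying Lebesgue differentiation.
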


\begin{proof}
Based on the relation (\ref{inverse subordinator definition}), we have
\begin{eqnarray*}
p_{\varphi}(s,t)&=&\dfrac{d}{ds} \left[ P(\mathbf{E}_{\varphi}(t)\leqslant s) \right]\\
&=&\dfrac{d}{ds}\left[ P(\mathbf{D}_{\varphi}(s)\geqslant t)\right]\\
&=&\dfrac{d}{ds}\left[1- P(\mathbf{D}_{\varphi}(s)< t)\right]\\
&=&\dfrac{d}{ds}\left[1- \int_0^t q_{\varphi}(\tau,s)d\tau\right],
\end{eqnarray*}
Then
\begin{eqnarray*}
\int_0^{+\infty} e^{-\lambda t} p_{\varphi}(s,t)dt&=&\int_0^{+\infty} e^{-\lambda t} \dfrac{d}{ds}\left[1- \int_0^t q_{\varphi}(\tau,s)d\tau\right]dt\\
&=&-\dfrac{d}{ds}\int_0^{+\infty} e^{-\lambda t} \int_0^t q_{\varphi}(\tau,s)d\tau dt\\
&=&-\dfrac{d}{ds}\int_0^{+\infty}q_{\varphi}(\tau,s)\int_\tau^{+\infty} e^{-\lambda t} dt d\tau\\
&=&-\dfrac{d}{ds}\int_0^{+\infty}q_{\varphi}(\tau,s)\cdot \frac{1}{\lambda}e^{-\lambda \tau} d\tau \\
&=&- \frac{1}{\lambda}\dfrac{d}{ds} e^{-s\varphi(\lambda)}=\dfrac{\varphi(\lambda)}{\lambda}e^{-s\varphi(\lambda)}.
\end{eqnarray*}

\end{proof}

\vspace{0.5cm}
Next, we focus on the properties of
$$
\mu(\lambda)=-a+\sqrt{a^2+\lambda}, \quad \quad \lambda>0.
$$
There exists a subordinator $\mathbf{D}_{\mu}(t)$ such that
\begin{equation}\label{Laplace transform of subordinator-1}
\mathbb{E}\left(  e^{-\lambda \mathbf{D}_{\mu}(t)} \right)=\int_0^{+\infty} e^{-\lambda t} q_\mu(s,t)ds=e^{-t\mu(\lambda)},
\end{equation}
where $q_\mu(s,t)$ is the probability density function of $\mathbf{D}_{\mu}(t)$, see \cite{Applebaum2009} for details.

We have \cite{Alrawashdeh2017,Applebaum2009}
$$
q_\mu(s,t)=\dfrac{ts^{-\frac{3}{2}}}{2\sqrt{\pi}}e^{at-a^2s-\frac{t^2}{4s}}, \qquad s,t>0.
$$
 $q_\mu(x,t)$ can also be regarded as the density of the inverse Gaussian process\cite[Example 1.3.21]{Applebaum2009}:
 $$
 \mathbf{G}(t)=\inf\{z>0: \mathbf{B}(z)+\sqrt{2}az=\frac{t}{\sqrt{2}}\}.
 $$
where $\mathbf{B}(t)$ is the one-dimensional standard Brownian motion.

Furthermore, we present a second-order partial differential equations satisfied by $q_\mu(s,t)$.

\begin{prop}\label{Section-2-prop-q-equation}
For $t,s>0$, the function $q_\mu(s,t)$  satisfies the second-order partial differential equation
\begin{equation*}
\frac{\partial^2 q_\mu}{\partial t^2} - 2a \frac{\partial q_\mu}{\partial t} - \frac{\partial q_\mu}{\partial s} = 0.
\end{equation*}
\end{prop}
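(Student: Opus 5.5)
Two routes are available: differentiate the explicit density directly, or use the Laplace transform in $s$. The cleanest plan is the Laplace transform, with a direct check as backup.

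\emph{Laplace-transform route.} Set $Q(\lambda,t)=\int_0^\infty e^{-\lambda s}q_\mu(s,t)\,ds=e^{-t\mu(\lambda)}$, by \eqref{Laplace transform of subordinator-1} (note the typo there: the integration variable is $s$). Then
$$\partial_t^2 Q-2a\,\partial_t Q=\bigl(\mu(\lambda)^2+2a\mu(\lambda)\bigr)Q.$$
Since $\mu=-a+\sqrt{a^2+\lambda}$ gives $(\mu+a)^2=a^2+\lambda$, we have $\mu^2+2a\mu=\lambda$, so this equals $\lambda Q$. On the other side, integrating by parts,
$$\int_0^\infty e^{-\lambda s}\partial_s q_\mu(s,t)\,ds=\lambda Q-q_\mu(0+,t).$$
From the explicit formula, $q_\mu(s,t)\to0$ as $s\to0+$ for $t>0$, since $e^{-t^2/(4s)}$ kills $s^{-3/2}$; also $e^{-\lambda s}q_\mu\to0$ as $s\to\infty$. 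Hence the transform of $\partial_t^2q_\mu-2a\partial_tq_\mu-\partial_sq_\mu$ vanishes for all $\lambda>0$, and uniqueness of the Laplace transform (Proposition \ref{LK bernstein function}), together with continuity in $s$, gives the PDE.

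\emph{Main obstacle.} The step needing care is interchanging $\partial_t$ and $\partial_t^2$ with the $s$-integral. Here $q_\mu$ and its $t$-derivatives are $s^{-3/2}$ (times powers of $s^{-1}$) multiplied by $e^{-t^2/(4s)-a^2s}$, with polynomial factors in $t$ and $t/s$. For $t$ in a compact subset of $(0,\infty)$ these are dominated by an integrable function of $s$, so dominated convergence justifies the interchange.

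\emph{Direct route (backup).} Write $\log q_\mu=\log t-\tfrac32\log s+at-a^2s-\tfrac{t^2}{4s}+\text{const}$ and set $L=\log q_\mu$. Then
$$L_t=\tfrac1t+a-\tfrac{t}{2s},\qquad L_{tt}=-\tfrac1{t^2}-\tfrac1{2s},\qquad L_s=-\tfrac{3}{2s}-a^2+\tfrac{t^2}{4s^2}.$$
The PDE divided by $q_\mu$ reads $L_{tt}+L_t^2-2aL_t-L_s=0$. Expanding $L_t^2=(\tfrac1t-\tfrac t{2s})^2+2a(\tfrac1t-\tfrac t{2s})+a^2$, the $a$-terms cancel against $-2aL_t$. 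What remains is
$$-\tfrac1{t^2}-\tfrac1{2s}+\tfrac1{t^2}-\tfrac1s+\tfrac{t^2}{4s^2}-a^2+\tfrac3{2s}+a^2-\tfrac{t^2}{4s^2}=0.$$
This confirms the identity independently, and I would present this short computation as the proof since it avoids all interchange issues.
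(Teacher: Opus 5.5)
Your direct logarithmic-derivative computation is correct and is essentially the paper's proof: the paper also expands $\ln q_\mu$, computes $\partial_t q/q$, $\partial_t^2 q/q$ and $\partial_s q/q$, and checks that the combination cancels. Your Laplace-transform route is also valid and shows where the PDE comes from, namely the identity $\mu(\lambda)^2+2a\mu(\lambda)=\lambda$ together with $q_\mu(0+,t)=0$, at the modest cost of the dominated-convergence and boundary-term justifications you already give.
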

\begin{proof}
See \cite[theorem 3.1]{Kumar2011} for the proof. We give an elementary calculation as follows.
For brevity, we will drop the subscript $\mu$ of functions $q_\mu$ in the following proof.
Taking logarithmic derivatives of $q$,
\[
\ln q = at - a^2 s + \ln t - \frac{3}{2}\ln s - \frac{t^2}{4s} -\ln2\sqrt{\pi}.
\]
Thus,
\[
\frac{\partial_t q}{q} = a + \frac{1}{t} - \frac{t}{2s}, \qquad
\frac{\partial_s q}{q} = -a^2 - \frac{3}{2s} + \frac{t^2}{4s^2}.
\]
The second-order time derivative gives
\[
\frac{\partial_t^2 q}{q} = \left( a + \frac{1}{t} - \frac{t}{2s} \right)^2 - \frac{1}{t^2} - \frac{1}{2s}.
\]

Therefore
\[
\begin{aligned}
&\frac{\partial_t^2 q}{q} - 2a\frac{\partial_t q}{q} - \frac{\partial_s q}{q} \\
&= \left[ \left( a + \frac{1}{t} - \frac{t}{2s} \right)^2 - \frac{1}{t^2} - \frac{1}{2s} \right] - 2a\left( a + \frac{1}{t} - \frac{t}{2s} \right) - \left( -a^2 - \frac{3}{2s} + \frac{t^2}{4s^2} \right)\\
&=0.
\end{aligned}
\]



\end{proof}

The inverse subordinator of $\mathbf{D}_{\mu}(t)$ is
\begin{equation*}
\mathbf{E}_{\mu}(t)=\inf\{z>0: \mathbf{D}_{\mu}(z)>t\}.
\end{equation*}
Let $p_\mu(s,t)$ be the probability density of $\mathbf{E}_{\mu}(t)$.
It seems that $p_\mu(s,t)$ does not have an explicit expression.

\vspace{0.5cm}

Finally, if $a=0$, then $\mu(\lambda)$ reduces to $\zeta(\lambda)=\sqrt{\lambda} (\lambda>0)$.
In this case, $\mathbf{D}_\mu (t)$  reduces to $1/2$ stable subordinator $\mathbf{D}_\zeta (t)$ and
\begin{equation*}
q_\zeta(s,t)=\dfrac{ts^{-\frac{3}{2}}}{2\sqrt{\pi}}e^{-\frac{t^2}{4s}},\quad \int_0^\infty e^{-\lambda s}q_\zeta(s,t)ds=e^{-t\lambda^2},
\end{equation*}
see \cite[Example 1.3.19, Exercise 1.3.20]{Applebaum2009}. A direct computation shows that
\begin{equation*}
p_\zeta(s,t)=\dfrac{1}{\sqrt{\pi t}} e^{-\frac{s^2}{4t}}
\end{equation*}
which is the kernel in subordination formula \eqref{introduction-weierstrass} from wave equation \eqref{introduction-WE} to diffusion equation \eqref{introduction-DE}
in Section 1.

\vspace{1cm}

\subsection{Abstract evolutionary integral equations}

Throughout this paper, $X$ is a Banach space, $A$ is a densely defined closed linear operator on $X$,
and $B(X)$ denotes the set of all bounded linear operators on $X$.
 $L^1(\mathbb{R}_+; X)$ denotes the space of all Bochner-measurable functions:
$u: \mathbb{R}_+ \rightarrow X$ such that $\|u(\cdot)\|$ is integrable.

Methods of vector-valued Laplace transform \cite{Arendt2010book,Engel1999,Pruss1993book} are frequently used in this paper.
Suppose an $X$ -valued function $u(t)$ is exponentially bounded, i.e., there exist constants $M>0, \omega\in \mathbb{R}$
such that $\|u(t)\|\leqslant Me^{\omega t}$. In particular, if $\omega=0$, $u(t)$ is called bounded.

Suppose an $X$-valued function $u(t)$ is exponentially bounded,
 the Laplace transform of $u$ is denoted by
$$
\hat{u}(\lambda)=\int_0^\infty e^{-\lambda t}u(t)\,dt, \qquad \operatorname{Re}\lambda>\omega.
$$
For $f\in L^1(\mathbb{R}_+; \mathbb{R})$ and $g\in L^1(\mathbb{R}_+; X)$, the Laplace convolution is defined by
$$
(f\ast g)(t)=\int_0^t f(t-s)g(s)ds=\int_0^t f(s)g(t-s)ds, \quad t>0.
$$
Obviously,
$$
\widehat{(f\ast g)} (\lambda)=\hat{f}(\lambda)\hat{g}(\lambda).
$$

\vspace{1cm}

\begin{defn}\cite{Pruss1993book}
\label{resolvent family}
A family $\{T(t)\}_{t \geq 0} \subset B(X)$
is called a resolvent family generated by $(a, A)$, if the
following conditions are satisfied:

(a) $T(t)$ is strongly continuous for $t\geq0$ and
$T(0)=I$;

(b) $T(t)A \subset AT(t)$ for $t\geq 0$;

(c) for $x \in D(A)$, the resolvent equation
\begin{equation}\label{resolvent equation}
T(t)x = x + \int_0^t a (t-s) T(s)Ax ds
\end{equation}
holds for all $t\geq 0$.
\end{defn}

\begin{rem}
(1) Special choices of $a(t)$ recover classical evolution equations:
\begin{itemize}
\item If $a(t) \equiv 1$, then $T(t)$ is a $C_0$ semigroup, and \eqref{resolvent equation} reduces to the first-order Cauchy problem $T'(t)x = A T(t)x$, see \cite{Applebaum2009,Arendt2010book, Engel1999}.
\item If $a(t) = t$, then $T(t)$ is a cosine family, and \eqref{resolvent equation} reduces to the second-order Cauchy problem $T''(t)x = A T(t)x$, see \cite{Arendt2010book}.
\item If $a(t) = \frac{t^{\alpha-1}}{\Gamma(\alpha)}$ with $\alpha>0$, then $T(t)$ is called an $\alpha$-times resolvent family, and \eqref{resolvent equation} can be interpreted as a differential equation of fractional order \cite{Bajlekova2001,Limiao2010}.
\end{itemize}
(2) Generally, if there exists $b(\cdot)\in L_{loc}^1((0,\infty))$ such that $(b\ast a)(t)=t^k (k=0,1,2,\cdots)$,
we may transform \eqref{resolvent equation} into $\Phi_t T(t)x=A T(t)x$
where $\Phi_t$ is a linear integro-differential operator acting on $T(t)x$ with respect to the time variable $t$.
Take an example for the case $(b\ast a)(t)=1$, it follows from \eqref{resolvent equation} that
$$
\int_0^t b(t-s) (T(s)x-x)ds=(b\ast a\ast T)(t)Ax=(1\ast T)(t)Ax=\int_0^t T(s)Axds.
$$
Then
$$
\frac{d}{dt} \int_0^t b(t-s) (T(s)x-x)ds=T(t)Ax=AT(t)x.
$$
Under appropriate regularity condition, we have
$$
\frac{d}{dt} \int_0^t b(t-s) (T(s)x-x)ds=\int_0^t b(t-s) T'(s)xds=AT(t)x
$$
where $\Phi_t T(t)x=\int_0^t b(t-s) T'(s)xds$ is an linear integro-differential operator. This formulation will be used in Section 5.

(3) Throughout this paper, we always assume that $T(t)$ is exponentially bounded.
\end{rem}

Next, we give a brief introduction to two types of subordination principles corresponding to
the specific changes in operator $A$ (space-subordination) and integral kernel $a(t)$ (time-subordination), respectively.

Firstly,  we introduce the Bernstein function calculi \cite{Applebaum2009,Gomilko2015,Schilling2012}.
Let $A$ be the generator of a bounded $C_0$ semigroup $\{e^{tA}\}_{t\geq0}$ on $X$, and let $\varphi\in \mathcal{BF}$, $\varphi\sim (0,b,\sigma)$.
Define the Bernstein functional calculus as
\begin{equation} \label{Bernstein functional calculus}
-\varphi(-A)x=Ax-\int_{(0,\infty)}(1-e^{sA})x\sigma(ds), \quad x\in D(A).
\end{equation}
Then $-\varphi(-A)$ also generates a bounded $C_0$ semigroup $\{e^{-t\varphi(-A)}\}_{t\geq0}$ on $X$:
\begin{equation}
e^{-t\varphi(-A)}x=\int_{(0,\infty)}e^{sA}x  Q_\varphi(ds,t),  \quad t\geq0
\end{equation}
where $Q_\varphi(s,t)$ is the distribution of subordinator $\mathbf{D}_\varphi(t)$ with Laplace exponent $\varphi (\cdot)$.
The semigroup $\{e^{-t\varphi(-A)}\}_{t\geq0}$ is called subordinate to the semigroup $\{e^{tA}\}_{t\geq0}$ with respect to the Laplace exponent  $\varphi (\cdot)$.

For instance,  $-\mu(-\Delta)=a-\sqrt{a^2-\Delta}$ is defined for $\Delta$  and Bernstein function $\mu(\lambda)=-a+\sqrt{a^2+\lambda}$,
the semigroup $\{e^{-t\mu(-\Delta)}\}_{t\geq0}$ is subordinate to diffusion semigroup $\{e^{t\Delta}\}_{t\geq0}$.

Secondly, for the subordination principle with respect to the integral kernel,
we introduce a fundamental principle as follows.
Although the equations in Sections 3-5  usually appear in the form of time-differential equations,
the underlying ideas of subordination construction primarily stem from this principle.
See \cite{Pruss1993book} for more application of subordination principle.

\begin{lem}
Let $T_a(t), T_b(t)$ be the resolvent family generated by $(a(t),A)$ and $(b(t),A)$ respectively. If there exists a Bernstein function $\varphi(\lambda)$
such that $\widehat{a}(\varphi(\lambda))=\widehat{b}(\lambda)$, then
$$
T_b(t)x=\int_0^\infty p_\varphi(s,t)T_a(s)xds.
$$
where $p_\varphi(s,t)$ satisfies $\int_0^\infty e^{-\lambda t}p_\varphi(s,t)dt=\frac{\varphi(\lambda)}{\lambda}e^{-s\varphi(\lambda)}$.
\end{lem}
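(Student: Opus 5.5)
We prove the formula by computing Laplace transforms in $t$ and then invoking uniqueness of the vector-valued Laplace transform. Define
$$S(t)x:=\int_0^\infty p_\varphi(s,t)T_a(s)x\,ds .$$
We show that $\widehat{S x}(\lambda)$ coincides with the Laplace transform of $T_b(t)x$ for all sufficiently large $\lambda$. Since both functions are continuous and exponentially bounded, the uniqueness theorem for Laplace transforms then gives $S(t)x=T_b(t)x$ for all $t\ge 0$.

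\textbf{Step 1: the transform of $T_a$ and $T_b$.} For $x\in D(A)$, take the Laplace transform of the resolvent equation \eqref{resolvent equation}. This gives
$$\widehat{T_a x}(\lambda)=\tfrac1\lambda x+\hat a(\lambda)\widehat{T_a}(\lambda)Ax.$$
Rearranging, and using closedness of $A$ and the commutation $T(t)A\subset AT(t)$, we obtain
$$\widehat{T_a}(\lambda)x=\tfrac{1}{\lambda \hat a(\lambda)}\Big(\tfrac{1}{\hat a(\lambda)}-A\Big)^{-1}x,$$
valid for large $\lambda$ with $1/\hat a(\lambda)\in\rho(A)$. This extends to all $x\in X$ by density and boundedness. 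Analogously,
$$\widehat{T_b}(\lambda)=\tfrac{1}{\lambda\hat b(\lambda)}\big(1/\hat b(\lambda)-A\big)^{-1}.$$

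\textbf{Step 2: the transform of $S$.} Use Fubini and the transform of $p_\varphi$ from the preceding proposition:
$$\widehat{Sx}(\lambda)=\int_0^\infty \frac{\varphi(\lambda)}{\lambda}e^{-s\varphi(\lambda)}T_a(s)x\,ds=\frac{\varphi(\lambda)}{\lambda}\widehat{T_a}(\varphi(\lambda))x .$$
Insert the formula of Step 1 with $\varphi(\lambda)$ in place of $\lambda$:
$$\widehat{Sx}(\lambda)=\frac{\varphi(\lambda)}{\lambda}\cdot\frac{1}{\varphi(\lambda)\hat a(\varphi(\lambda))}\big(1/\hat a(\varphi(\lambda))-A\big)^{-1}x .$$
Substituting $\hat a(\varphi(\lambda))=\hat b(\lambda)$ turns this into exactly $\widehat{T_b}(\lambda)x$.

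\textbf{Main obstacle: justifying the Laplace-transform manipulations.} We must check three things.
\begin{itemize}
\item $S(t)$ is well defined and exponentially bounded. Suppose $\|T_a(s)\|\le Me^{\omega s}$. The quantity $\int p_\varphi(s,t)e^{\omega s}\,ds=\mathbb E\, e^{\omega \mathbf E_\varphi(t)}$ must be finite. We first restrict to the case where this moment is finite with exponential growth in $t$; if necessary we assume $T_a$ bounded ($\omega\le 0$), which is the setting used later in the paper.
\item Fubini applies for $\operatorname{Re}\lambda$ large, specifically for $\varphi(\lambda)>\omega$. Since $\varphi$ is increasing and unbounded when $\sigma(0,\infty)=\infty$ or $b>0$, this holds for all large $\lambda$.
\item $S(t)$ is strongly continuous. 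This follows from continuity of $t\mapsto p_\varphi(\cdot,t)$ in $L^1$, which is the law of $\mathbf E_\varphi(t)$ being weakly continuous, together with dominated convergence.
\end{itemize}
The commutation property (b) and $S(0)=I$ transfer directly, so $S$ is in fact a resolvent family for $(b,A)$. Uniqueness of resolvent families then gives the conclusion as well.
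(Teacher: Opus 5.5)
Your proof is correct and follows the paper's argument: Laplace-transform in $t$, interchange the integrals using $\int_0^\infty e^{-\lambda t}p_\varphi(s,t)\,dt=\frac{\varphi(\lambda)}{\lambda}e^{-s\varphi(\lambda)}$, evaluate $\widehat{T_a}$ at $\varphi(\lambda)$, substitute $\hat a(\varphi(\lambda))=\hat b(\lambda)$, and conclude by uniqueness. Your $\frac{1}{\lambda\hat a}(1/\hat a-A)^{-1}$ is just the paper's $\frac{1}{\lambda}(I-\hat a A)^{-1}$ rewritten, and your extra justifications go beyond the paper. The hedge about restricting to bounded $T_a$ is unnecessary: $P(\mathbf{E}_\varphi(t)>s)=P(\mathbf{D}_\varphi(s)<t)\le e^{\lambda t-s\varphi(\lambda)}$ and $\varphi$ is unbounded, so $\mathbb{E}\,e^{\omega\mathbf{E}_\varphi(t)}<\infty$ for every $\omega$, with at most exponential growth in $t$.
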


\begin{proof}
Take Laplace transform on both sides of $T(t)x = x + \int_0^t a (t-s) T(s)Ax ds$ to obtain
$$
\widehat{T}(\lambda)x=\frac{1}{\lambda} [I-\widehat{a}(\lambda)A]^{-1}x.
$$

We have
\begin{eqnarray*}
\int_0^\infty e^{-\lambda t}\int_0^\infty p_\varphi(s,t)T_a(s)xds dt&=&\int_0^\infty \int_0^\infty e^{-\lambda t}p_\varphi(s,t)dt T_a(s)xds\\
&=&\int_0^\infty \frac{\varphi(\lambda)}{\lambda}e^{-s\varphi(\lambda)}T_a(s)xds\\
&=&\frac{\varphi(\lambda)}{\lambda}\cdot \frac{1}{\varphi(\lambda)} [I-\widehat{a}(\varphi(\lambda))A]^{-1}x\\
&=&\frac{1}{\lambda}[I-\widehat{b}(\lambda)A]^{-1}x\\
&=&\int_0^\infty e^{-\lambda t} T_b(t)xdt.
\end{eqnarray*}
Then we obtain
$$
T_b(t)x=\int_0^\infty p_\varphi(s,t)T_a(s)xds
$$
by the uniqueness of Laplace transform.

\end{proof}

\begin{rem}
We will use this principle to construct some subordination equations in Section 5.
When applying the above lemma, the main difficulty lies in judging whether the inverse function $\varphi(\lambda)=\widehat{a}^{-1}(\widehat{b}(\lambda))$
is a Bernstein function or not, which may be solved by  Lemma \ref{Section 2-Bernstein-new-result}.


\end{rem}

\vspace{1cm}

\section{Abstract subordination from the telegraph equation to the diffusion equation}

In this section, we will first introduce the main subordination from abstract telegraph-type equation to diffusion-type equation.
Subsequently, through the composition of this subordination formula with other subordination representations, we provide new insights into certain established results.

\subsection{Abstract subordination representation}

Consider abstract evolutionary equations in Banach space $X$.
The subordination relations among the following three abstract equations
\begin{equation}\label{abstract wave equation}
\begin{aligned}
\begin{cases}
    w''(t)=Aw(t), \quad \quad t>0;\\
    w(0)=u_0, \quad w'(0)=0
\end{cases}
\end{aligned}
\tag{AWE}
\end{equation}

\begin{equation}\label{abstract telegraph equation}
\begin{aligned}
\begin{cases}
    u''(t)+2a u'(t)=Au(t), \quad \quad t>0;\\
    u(0)=u_0, \quad u'(0)=0
\end{cases}
\end{aligned}
\tag{ATE}
\end{equation}
and
\begin{equation}\label{abstract diffusion equation}
\begin{aligned}
\begin{cases}
    v'(t) &= Av(t),\quad\quad t>0; \\
    v(0) &= u_0
\end{cases}
\end{aligned}
\tag{ADE}
\end{equation}
are our main research objects in this section. As illustrated in Section 1, we have
$$
\text{(AWE)}\mapsto\text{(ADE)}: \quad  v(t)=\int_0^{+\infty} p_\zeta(s,t)w(s)ds
$$
and
$$
\text{(AWE)}\mapsto\text{(ATE)}: \quad  u(t)=\int_0^{+\infty} p_\eta(s,t)w(s)ds.
$$

Now we establish the subordination principle of $\text{(ATE)}\mapsto\text{(ADE)}$.

\begin{thm}
Let $A$ be the generator of a $C_0$ semigroup $T(t)$ on a Banach space $X$, and let $u(t)$ be the solution of \eqref{abstract telegraph equation}.
Then the solution of \eqref{abstract diffusion equation} can be represented as
\begin{equation}\label{abstract subordination representation}
v(t)=\int_0^{+\infty} p_\mu(s,t)u(s)ds, \quad t>0.
\end{equation}
where $p_\mu(s,t)$ is defined  in Section 2.
\end{thm}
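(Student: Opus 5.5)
The plan is to apply the subordination lemma for resolvent families stated at the end of Section 2, with the kernels read off from the two equations. Equation \eqref{abstract diffusion equation} corresponds to the resolvent family generated by $(b,A)$ with $b(t)\equiv 1$, so $\widehat b(\lambda)=1/\lambda$. Equation \eqref{abstract telegraph equation} with $u'(0)=0$ should correspond to the kernel $a(t)=\frac{1-e^{-2at}}{2a}$. To check this, integrate $u''+2au'=Au$ twice using $u(0)=u_0$ and $u'(0)=0$. This gives $u'(t)+2a(u(t)-u_0)=\int_0^t Au(s)\,ds$. Solving this linear first-order equation yields
\[
u(t)=u_0+\int_0^t \frac{1-e^{-2a(t-s)}}{2a}\,Au(s)\,ds.
\]
Hence $\widehat{a}(\lambda)=\frac{1}{\lambda(\lambda+2a)}$.

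Next I will look for a Bernstein function $\varphi$ with $\widehat a(\varphi(\lambda))=\widehat b(\lambda)$, that is, $\varphi(\lambda)^2+2a\varphi(\lambda)=\lambda$. The positive root is $\varphi(\lambda)=-a+\sqrt{a^2+\lambda}=\mu(\lambda)$. This function is already listed in \eqref{Four Laplace exponent} as a Bernstein function with $\mu(0+)=0$ and infinite Lévy measure. So $p_\mu(s,t)$ exists, and by \eqref{Laplace transform of inverse subordinator-1} it satisfies $\int_0^\infty e^{-\lambda t}p_\mu(s,t)\,dt=\frac{\mu(\lambda)}{\lambda}e^{-s\mu(\lambda)}$. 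The lemma then gives
\[
T_b(t)u_0=\int_0^\infty p_\mu(s,t)\,T_a(s)u_0\,ds,
\]
where $T_b(t)=T(t)$ is the $C_0$ semigroup, so $v(t)=T(t)u_0$. Also $u(s)=T_a(s)u_0$.

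The main obstacle is existence and exponential boundedness of the telegraph solution $u(t)=T_a(s)u_0$, which the lemma's Laplace-transform argument needs. Exponential boundedness is also needed for Fubini to apply on $\int_0^\infty e^{-\lambda t}\int_0^\infty p_\mu(s,t)u(s)\,ds\,dt$. The hypothesis only says $A$ generates a $C_0$ semigroup, not a cosine family, so existence cannot be taken from (AWE).

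I would construct $T_a$ by Laplace inversion. On $X$ we need $\frac1\lambda(I-\widehat a(\lambda)A)^{-1}=(\lambda+2a)(\lambda(\lambda+2a)-A)^{-1}$. For large real $\lambda$ this is well defined because $\lambda^2+2a\lambda$ lies in the resolvent set of $A$. I would then verify the Hille–Yosida-type (Widder) estimates on its derivatives. Alternatively, one can obtain $u$ directly via subordination of the semigroup: $\eta(\lambda)=\sqrt{\lambda^2+2a\lambda}$ relates (ATE) to a first-order problem, and $u(t)=\int_0^\infty p_\eta(s,t)w(s)ds$ holds formally.

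Failing a clean construction, I would note that in the paper's setting "$u(t)$ is the solution" presupposes that $u$ exists, and assume it is exponentially bounded, as stated in the Remark. The two sides then agree on Laplace transforms for $\operatorname{Re}\lambda$ large: $\frac{\mu}{\lambda}\widehat u(\mu(\lambda))=\frac{\mu}{\lambda}\cdot\frac{\mu+2a}{\mu(\mu+2a)-A}u_0=(\lambda-A)^{-1}u_0$, using $\mu(\mu+2a)=\lambda$. Uniqueness of the Laplace transform together with continuity in $t$ then gives the identity for every $t>0$.
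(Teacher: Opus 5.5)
Your argument is correct and is essentially the paper's proof. The paper Laplace-transforms (ATE) to get $\hat u(\mu)=(\mu+2a)(\mu^2+2a\mu-A)^{-1}u_0$, applies Fubini with $\int_0^\infty e^{-\lambda t}p_\mu(s,t)\,dt=\frac{\mu(\lambda)}{\lambda}e^{-s\mu(\lambda)}$, substitutes $\mu=-a+\sqrt{\lambda+a^2}$ so that $\mu(\mu+2a)=\lambda$ yields $(\lambda-A)^{-1}u_0$, and concludes by uniqueness of the Laplace transform; this is exactly your final paragraph, and your detour through the Section 2 subordination lemma with kernels $\frac{1-e^{-2at}}{2a}$ and $1$ only repackages the same computation.

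Your caveat about existence and exponential boundedness of $u$ is legitimate, and the paper assumes it tacitly as well. However, the Widder-estimate construction you sketch cannot succeed for a general $C_0$-semigroup generator. The substitution $u=e^{-at}y$ turns (ATE) into $y''=(A+a^2)y$, so solvability of (ATE) essentially requires $A$ to generate a cosine family. Taking the existence of $u$ as a hypothesis, as the statement does, is therefore the right resolution.
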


\begin{proof}
Denote the Laplace transform of $u(t)$ by
$$
\hat{u}(\mu)=\int_0^{+\infty} e^{-\mu t} u(t)dt.
$$
Taking the Laplace transform on both sides of (\ref{abstract telegraph equation}) yields
$$
\mu^2\hat{u}(\mu)-\mu u_0+2a(\mu\hat{u}(\mu)- u_0)=A\hat{u}(\mu),
$$
then
$$
\hat{u}(\mu)=(\mu+2a) \left(\mu^2+2a\mu-A\right)^{-1}u_0.
$$

Take the Laplace transform on the both sides of (\ref{abstract subordination representation}), we have
\begin{eqnarray*}
&&\int_0^{+\infty} e^{-\lambda t} v(t)dt \\
&=&\int_0^{+\infty} e^{-\lambda t} \int_0^{+\infty} p_\mu(s,t)u(s)ds dt \\
&=&\int_0^{+\infty}\left[\int_0^{+\infty}  e^{-\lambda t} p_\mu(s,t)dt\right] u(s)ds\\
&=&\int_0^{+\infty} \frac{\mu(\lambda)}{\lambda}e^{-s\mu(\lambda)} u(s)ds\\
&=&\int_0^{+\infty} \frac{-a+\sqrt{\lambda+a^2}}{\lambda}e^{-s(-a+\sqrt{\lambda+a^2})} u(s)ds\\
&=& \frac{(-a+\sqrt{\lambda+a^2})(\sqrt{\lambda+a^2}+a) }{\lambda}  \left[(-a+\sqrt{\lambda+a^2})^2+2a(-a+\sqrt{\lambda+a^2}) -A  \right]^{-1}u_0\\
&=&(\lambda-A)^{-1}u_0
\end{eqnarray*}
where we use  the Laplace transform $\hat{u}(\mu)$  with substitution $\mu=-a+\sqrt{\lambda+a^2}$.
That is,
$$
\hat{v}(\lambda)=(\lambda-A)^{-1}u_0.
$$
Therefore $v(t)$ is the solution of (\ref{abstract diffusion equation})
by the theory of Laplace transform for semigroup of operators.
\end{proof}

\begin{rem}
Consider
\begin{equation}\label{section-3-rem-1-2}
\begin{cases}
2av'(t)=Av(t), \quad \quad t>0,\\
v(0)=u_0
\end{cases}
\end{equation}
which is essentially  the same as  \eqref{abstract diffusion equation}.

Over the past few decades, the relation between  (\ref{abstract telegraph equation})  and (\ref{section-3-rem-1-2})
has been extensively studied.
There have been two approaches to establish the connection between  (\ref{abstract telegraph equation})  and (\ref{section-3-rem-1-2})
which will be described below.

Firstly, consider
\begin{equation*}
\begin{aligned}
\begin{cases}
    \varepsilon u_\varepsilon''(t)+2a u_\varepsilon'(t)=Au_\varepsilon(t), \quad \quad t>0,\\
    u_\varepsilon(0)=u_0, \quad u_\varepsilon'(0)=0.
\end{cases}
\end{aligned}
\end{equation*}
Intuitively, it should hold that
\begin{equation*}
\lim\limits_{\varepsilon\rightarrow 0+} u_\varepsilon(t)=v(t).
\end{equation*}
This type of singular perturbation problem has been generalized to various equations  \cite{Fattorini1985book, Kisynski1963, Schoene1969}.
This approach is essentially a variation of Kac's scaling method. Precisely, for
$$
 u''(t)+2b a u'(t)=c^2Au(t),
$$
then $u(t)\rightarrow v(t)$ under Kac's condition \cite{Kac1974}:
$$
b\rightarrow \infty, \quad c\rightarrow \infty, \quad \frac{c^2}{b}\rightarrow 1.
$$

\vspace{0.5cm}

Another way is the research on  the long-time  diffusion phenomenon of telegraph equation. Intuitively,
the effect of the second-order time derivative term in the telegraph equation will quickly approach $0$
as time $t$ increases. Therefore,
\begin{equation}\label{diffusion phenomenon}
\lim\limits_{t\rightarrow \infty} (u(t)-v(t))=0
\end{equation}
in an appropriate sense and convergence rate.
Since the introduction of the stochastic model for the telegraph equation,
its long-time diffusion behavior has been extensively studied in probability theory (see, e.g., \cite{Janssen1990} ).
Also, various abstract results with precise decay rates have been proposed.
For example, let $(A, D(A))$ be a closed self-adjoint positive semi-definite operator on a separable Hilbert space $X$, $u_0\in D(A^{1/2})$.
Chill and Haraux have proved that \cite{Chill2003JDE},
there exists a constant $C\geq0$ such that for every $t\geq 1$,
$$
t\|u(t)-v(t)\|\leq C\|u_0\|.
$$
See \cite{Clarke2008, Ikehata2003Studia, Ikehata2013JDE, Radu2011jDE, Radu2016SIAM, Sobajima2021, Taylor2020JDE} and the references therein
for this issue and generalization from different perspectives.

In the present paper, similar to  Theorem 3.1, we can build the third way-subordination representation,
\begin{equation*}
v(t)=\int_0^{+\infty} p_{*}(s,t)u(s)ds, \quad t>0
\end{equation*}
where $p_{*}(s,t)$ is determined by
$$
\int_0^{+\infty} e^{-\lambda t} p_{*}(s,t)dt=\dfrac{-a+\sqrt{a^2+2a\lambda}}{\lambda} e^{-s (-a+\sqrt{a^2+2a\lambda})}.
$$

All the three ideas mentioned above will also be utilized in the analysis of duality between two relativistic diffusion equations in Section 4.

\end{rem}


\vspace{1cm}

\subsection{Applications associated with composition}

\begin{example}
The subordination $\text{(AWE)}\mapsto \text{(ATE)}$ and $\text{(ATE)}\mapsto\text{(ADE)}$ can be composed into $\text{(AWE)}\mapsto\text{(ADE)}$.

Since
$$
v(t)=\int_0^{+\infty} p_\zeta(s,t)w(s)ds
$$
and
\begin{eqnarray*}
v(t)&=&\int_0^{+\infty} p_\mu(\tau,t)u(\tau)d\tau\\
&=&\int_0^{+\infty} p_\mu(\tau,t)\int_0^{+\infty} p_\eta(s,\tau)w(s)ds d\tau\\
&=&\int_0^{+\infty} \left[\int_0^{+\infty} p_\mu(\tau,t) p_\eta(s,\tau) d\tau\right]w(s)ds,
\end{eqnarray*}
the following result should hold:
\begin{equation}\label{Section-3-example-1}
p_\zeta(s,t)=\int_0^{+\infty} p_\mu(\tau,t) p_\eta(s,\tau) d\tau.
\end{equation}

Let us give a direct proof.

On one hand, the Laplace transform with respect to $t$ on the left side is
$$
\int_0^{+\infty} e^{-\lambda t}p_\zeta(s,t)dt=\dfrac{1}{\sqrt{\lambda}}e^{-s\sqrt{\lambda}}.
$$

On the other hand, the Laplace transform w.r.t. $t$ on the right side is
\begin{eqnarray*}
&&\int_0^{+\infty} e^{-\lambda t}\int_0^{+\infty} p_\mu(\tau,t) p_\eta(s,\tau) d\tau dt\\
&=&\int_0^{+\infty}\left[ \int_0^{+\infty} e^{-\lambda t}p_\mu(\tau,t) dt \right]p_\eta(s,\tau) d\tau \\
&=&\int_0^{+\infty} \dfrac{1}{a+\sqrt{a^2+\lambda}} e^{-\tau (-a+\sqrt{a^2+\lambda})}p_\eta(s,\tau) d\tau\\
&=& \dfrac{1}{a+\sqrt{a^2+\lambda}} \cdot \dfrac{\sqrt{\left(-a+\sqrt{a^2+\lambda} \right)^2+2a( -a+\sqrt{a^2+\lambda})}}{ -a+\sqrt{a^2+\lambda}}
   \cdot e^{-s\sqrt{\left(-a+\sqrt{a^2+\lambda} \right)^2+2a( -a+\sqrt{a^2+\lambda})} }\\
&=&\dfrac{1}{a+\sqrt{a^2+\lambda}} \cdot \dfrac{\sqrt{\lambda}}{-a+\sqrt{a^2+\lambda}} \cdot e^{-s\sqrt{\lambda}}\\
&=&\dfrac{1}{\sqrt{\lambda}}e^{-s\sqrt{\lambda}}
\end{eqnarray*}
where we use the Laplace transform
$$
\int_0^{+\infty} e^{-\mu \tau} p_\eta(s,\tau) d\tau=\dfrac{\sqrt{\mu^2+2a\mu}}{\mu}e^{-s\sqrt{\mu^2+2a\mu}}
$$
by the substitution $\mu=-a+\sqrt{\lambda+a^2}$.

Therefore \eqref{Section-3-example-1} follows from the uniqueness of Laplace transform.
\end{example}

\vspace{1cm}

\begin{example}

We review some stochastic interpretations of the telegraph equation needed for the next example,
and we also clarify some points from Section 1 that were only briefly mentioned.

The solution of telegraph equation
\begin{equation} \label{classical telegraph}
\begin{cases}
   \partial_t^{2}u(x,t)+2a\partial_t u(x,t)=\partial_x^2u(x,t) \quad \quad \quad t>0,x\in \Real; \\
   u(x,0)=\phi(x), \partial_t u(x,0)=0 .
   \end{cases}
\end{equation}
can be represented as
$$
u(x,t)=\frac12 \mathbb{E}[\phi(x+\mathbf{U}(t))+\phi(x-\mathbf{U}(t))].
$$
where
$$
\mathbf{U}(t)=\int_0^t(-1)^{\mathbf{N}(s)}ds,\quad t\geq 0
$$
and $\mathbf{N}(t),t\geq 0$ is a Poisson process with parameter $a$. See \cite{Bobrowski2005book,Dewitt1989,Kac1974,Lichenggang2019} for more information.

Define the telegraph process as \cite{Orsingher2004,Orsingher2009AOP}
\begin{equation*}
\mathbf{H}(t)=\mathbf{V}(0)\int_0^t(-1)^{\mathbf{N}(s)}ds
\end{equation*}
where $\mathbf{V}(0)$ is a two-valued random variable taking values $\pm1$ each with probability $1/2$, and $\mathbf{N}(t)$ is independent of $\mathbf{V}(0)$.

Note that $\mathbf{U}(t)$ is not an inverse subordinator. However, we can connect $\mathbf{U}(t)$ with inverse subordinator $\mathbf{E}_\eta (t)$ where $\eta(\lambda)=\sqrt{\lambda^2+2a\lambda}$.
Let the density of $\mathbf{U}(t)$ be $g(r,t)$. Then $|\mathbf{U}(t)|$ has density $p(r,t)=g(r,t)+g(-r,t)  (r\geq0)$.
Moreover, we have the following Laplace transforms  \cite{Dewitt1989}
\begin{eqnarray*}
 \int_0^\infty e^{-\lambda t}g(r,t)dt=
\begin{cases} \frac12\bigl(\frac{\eta(\lambda)}{\lambda}+1\bigr) e^{-r\eta(\lambda)}, & r>0,\\
\frac12\bigl(\frac{\eta(\lambda)}{\lambda}-1\bigr) e^{r\eta(\lambda)}, & r<0,\\
\frac{\eta(\lambda)}{2\lambda}, & r=0. \end{cases}
\end{eqnarray*}
and
\begin{eqnarray*}
 \int_0^\infty e^{-\lambda t}p(r,t)dt=\frac{\eta(\lambda)}{\lambda}e^{-r\eta(\lambda)},\quad\quad r\geq 0.
\end{eqnarray*}
The solution of Equation (\ref{classical telegraph}) can be represented as
\begin{eqnarray*}
u(x,t)&=&\frac12 \mathbb{E}[\phi(x+\mathbf{U}(t))+\phi(x-\mathbf{U}(t))] \\
      &=&\frac12 \mathbb{E}[\phi(x+|\mathbf{U}(t)|)+\phi(x-|\mathbf{U}(t)|)] \\
      &=&\int_{-\infty}^{\infty}\frac12 [\phi(x+r)+\phi(x-r)]g(r,t)dr \\
      &=&\int_{0}^{\infty}\frac12 [\phi(x+r)+\phi(x-r)]p(r,t)dr .
\end{eqnarray*}

$|\mathbf{U}(t)|$ and the inverse subordinator $\mathbf{E}_\eta(t)$ are identically distributed.
Hereafter, we will use
$$
u(x,t)=\int_0^\infty p_\eta(s,t)w(x,s)ds
$$
to represent the solution of telegraph equation  (\ref{classical telegraph}), in which $w(x,t)$ is the solution of wave equation
\begin{equation*}
\begin{cases}
   \partial_t^{2}w(x,t)=\partial_x^2w(x,t) \quad \quad \quad t>0,x\in \Real; \\
   w(x,0)=\phi(x), \partial_t w(x,0)=0 .
   \end{cases}
\end{equation*}

Suppose that
\begin{equation*}
\begin{cases}
   \partial_t v (x,t)=\partial_x^2 v(x,t) \quad \quad \quad t>0,x\in \Real; \\
   v(x,0)=\phi(x).
   \end{cases}
\end{equation*}
We have
\begin{eqnarray*}
v(x,t)&=&\int_0^\infty p_\mu (s,t) u(x,s)ds \\
&=&\int_0^\infty p_\zeta (s,t) w(x,s)ds=\int_0^\infty \frac{1}{\sqrt{\pi t}} e^{-\frac{s^2}{4t}}w(x,s)ds \\
&=&\int_0^\infty \frac{1}{\sqrt{\pi t}} e^{-\frac{s^2}{4t}}[\phi(x+s)+\phi(x-s)]ds \\
&=&\int_0^\infty \frac{1}{\sqrt{\pi t}} e^{-\frac{s^2}{4t}}\phi(x+s)ds +  \int_0^\infty \frac{1}{\sqrt{\pi t}} e^{-\frac{s^2}{4t}}\phi(x-s)ds  \\
&=&\int_{-\infty}^0 \frac{1}{\sqrt{\pi t}} e^{-\frac{s^2}{4t}}\phi(x-s)ds +  \int_0^\infty \frac{1}{\sqrt{\pi t}} e^{-\frac{s^2}{4t}}\phi(x-s)ds        \\
&=&\int_{-\infty}^\infty \frac{1}{\sqrt{\pi t}} e^{-\frac{s^2}{4t}}\phi(x-s)ds
\end{eqnarray*}
where we have provided detailed calculation from the third to the sixth line to explain the source of the abstract Weierstrass formula mentioned
in Section 1.

\end{example}

\vspace{0.5cm}

\begin{example}
Consider the following three equations:

\begin{equation}\label{interpretation-Orsingher-PTRF-2004-1}
\begin{aligned}
\begin{cases}
   \partial_t^{2}u(x,t)+2a\partial_t u(x,t)=\partial_x^2u(x,t) \quad \quad \quad t>0,x\in \Real; \\
   u(x,0)=\psi(x), \partial_t u(x,0)=0
   \end{cases}
\end{aligned}
\end{equation}

\begin{equation}\label{interpretation-Orsingher-PTRF-2004-2}
\begin{aligned}
\begin{cases}
   \partial_t v(x,t)=\partial_x^2v(x,t) \quad \quad \quad t>0,x\in \Real; \\
   v(x,0)=\psi(x)
   \end{cases}
\end{aligned}
\end{equation}
and
\begin{equation}\label{interpretation-Orsingher-PTRF-2004-3}
\begin{aligned}
\begin{cases}
   \partial_t g(x,t)+2a D_t^{1/2}g(x,t)=\partial_x^2g(x,t), \quad \quad t>0,x\in \Real;\\
    g(x,0)=\psi(x)
\end{cases}
\end{aligned}
\end{equation}
where $D_t^{1/2}g(x,t)=\frac{1}{\Gamma(1/2)}\int_0^t (t-s)^{-1/2}\partial_sg(x,s)ds$ is the $1/2$ order Caputo fractional derivative with respect to $t$.
Then the fundamental solution of (\ref{interpretation-Orsingher-PTRF-2004-3}) is the density of $\mathbf{H}(|\mathbf{B}(t)|)$
where $\mathbf{B}(t)$ is a one-dimensional standard Brownian motion independent of $\mathbf{H}(t)$ \cite{Orsingher2004,Orsingher2009AOP}.
Now we can give another interpretation to it with composite subordination principle as follows.
Let $\mathbf{D}_1(t)$ and $\mathbf{D}_2(t)$ be independent subordinators with Laplace exponent $h_1(\lambda)=-a+\sqrt{a^2+\lambda}$ and $h_2(\lambda)=\lambda+ 2a\lambda^{1/2}$.
Moreover, let $p_1(s,t)$ and $p_2(s,t)$ be the probability densities of $\mathbf{E}_1(t)$ and $\mathbf{E}_2(t)$ respectively.
Then we have
$$
v(x,t)=\int_0^\infty p_1(s,t)u(x,s)ds, \quad  g(x,t)=\int_0^\infty p_2(s,t)v(x,s)ds.
$$

Let
$$
p_0(s,t)=\int_0^\infty p_1(\tau,t)p_2(s,\tau)d\tau.
$$
Then
$$
g(x,t)=\int_0^\infty p_0(s,t)u(x,s)ds.
$$
where
$$
p_0(s,t)=\frac{1}{\sqrt{\pi t}} e^{-\frac{s^2}{4t}}
$$
is the density of $|\mathbf{B}(t)|$ or inverse $1/2$-stable subordinator. To sum up, we can provide a new stochastic interpretation of the subordination from \eqref{interpretation-Orsingher-PTRF-2004-1} to \eqref{interpretation-Orsingher-PTRF-2004-3}:
$$
\mathbf{H}(|\mathbf{B}(t)|)=\mathbf{H}(\mathbf{E}_1(\mathbf{E}_2(t)))
$$
in the sense of distribution. In words of flowcharts,
we can decompose the known subordination $(3.6)\mapsto (3.8)$ into  $(3.6)\mapsto (3.7)\mapsto (3.8)$ with stochastic interpretation.

\end{example}





\vspace{1cm}

\section{Duality between the two relativistic diffusion models}

In this section, we will review the analytic continuation derivation of  two relativistic diffusion equations,
then  illustrate the {\it duality} between them from views of subordination and asymptotic behaviors.

\subsection{Analytic continuation derivation}

In this subsection, we review the derivation of the two concrete relativistic diffusion equations \eqref{introduction-TE} and \eqref{introduction-NLE} via analytic continuation, as shown in \cite{Baeumer2010}.

Our starting point is the free relativistic Schr\"odinger equation \cite{Carmona1990JFA,Lieb1990BAMS,Lieb2001book}:
$$
i\hbar\frac{\partial \Psi}{\partial t}(x,t)=\bigl[-mc^2+\sqrt{m^2c^4-\hbar^2c^2\Delta}\bigr]\Psi(x,t), \qquad x\in\mathbb{R}^n.
$$

(1)
By using the analytic continuation $t\to it$, we obtain \cite{Baeumer2010}
\begin{equation}\label{Section-4-NLE}
\hbar\frac{\partial f}{\partial t}(x,t)=-\bigl[-mc^2+\sqrt{m^2c^4-\hbar^2c^2\Delta}\bigr]f(x,t), \qquad x\in\mathbb{R}^n,\ t>0,
\end{equation}
where we have replaced the function symbol $\Psi$ by $f$.


(2) By using the analytic continuation $m\rightarrow im$, we have formally
$$
i\hbar\dfrac{\partial \Psi}{\partial t}=[-imc^2+\sqrt{(im)^2c^4-\hbar^2c^2\Delta}]\Psi=i[-mc^2+\sqrt{m^2c^4+\hbar^2c^2\Delta}]\Psi.
$$

Then
$$
 \hbar\frac{\partial \Psi}{\partial t}=\bigl[-mc^2+\sqrt{m^2c^4+\hbar^2c^2\Delta}\bigr]\Psi.
$$
Here $\sqrt{m^2c^4+\hbar^2c^2\Delta}\Psi(x)$ is defined formally by the Fourier transform
$$
 \mathcal{F}\bigl(\sqrt{m^2c^4+\hbar^2c^2\Delta}\Psi\bigr)(\xi)= \sqrt{m^2c^4-\hbar^2c^2\|\xi\|^2}\;\mathcal{F}(\Psi)(\xi).
$$

However, since the symbol of the operator is not positive definite,
we have to adopt the fractional-order definition of the wave operator to obtain a rigorous expression \cite{Enciso2017JFA}.
Note also that interpreting the expression
$$
 \bigl[-mc^2+\sqrt{m^2c^4+\hbar^2c^2\Delta}\bigr]\Psi
 $$
  physically is rather difficult \cite{Baeumer2010,Enciso2017JFA}.


To avoid an expression that lacks physical significance and to avoid the inconvenient fractional-order definition of the wave operator,
we perform a formal computation:
$$
\left( \hbar\dfrac{\partial }{\partial t}+ mc^2 \right)^2\Psi=(m^2c^4+\hbar^2c^2\Delta)\Psi.
$$
Then
$$
\hbar^2\dfrac{\partial^2 \Psi}{\partial t^2}+2mc^2\hbar\dfrac{\partial \Psi}{\partial t}=\hbar^2c^2\Delta\Psi.
$$
Finally, we obtain \cite{Baeumer2010,Gaveau1984}
\begin{equation}\label{Section-4-TE}
\dfrac{\partial^2 u}{\partial t^2}+\frac{2mc^2}{\hbar}\dfrac{\partial u}{\partial t}=c^2\Delta u
\end{equation}
where we have replaced the function symbol $\Psi$ by $u$.

(3) Analytic continuation  $\hbar\to -i\hbar$  yields the same results as $m\to im$.

\vspace{1cm}



\subsection{Duality: subordination with $\mathbf{D}_\mu(t)$ and $\mathbf{E}_\mu(t)$}

Let $a=\frac{mc^2}{\hbar}$. Consider the following three equations:
\begin{equation}\label{Section-4-CTE}
\begin{aligned}
\begin{cases}
    \frac{\partial^2 u}{\partial t^2}+2a\frac{\partial u}{\partial t}=c^2 \Delta u, \quad \quad t>0, x\in \mathbb{R}^n;\\
    u(x,0)=u_0(x), \quad u'(x,0)=0
\end{cases}
\end{aligned}
\end{equation}

\begin{equation}\label{Section-4-CDE}
\begin{aligned}
\begin{cases}
    \frac{\partial v}{\partial t}=c^2 \Delta v, \quad \quad t>0, x\in \mathbb{R}^n;\\
    v(x,0)=u_0(x)
\end{cases}
\end{aligned}
\end{equation}
and
\begin{equation}\label{Section-4-CNLE}
\begin{aligned}
\begin{cases}
    \frac{\partial f}{\partial t}=(a-\sqrt{a^2-c^2\Delta})f, \quad \quad t>0, x\in \mathbb{R}^n;\\
    f(x,0)=u_0(x).
\end{cases}
\end{aligned}
\end{equation}

We have
\begin{equation}\label{Section-4-subordination-expression}
\quad f(x,t)=\int_0^\infty q_\mu(s,t) v(x,s)ds; \qquad v(x,t)=\int_0^\infty p_\mu(s,t) u(x,s)ds
\end{equation}
where $q_\mu(s,t)$ and $p_\mu(s,t)$ are probability density of subordinator $\mathbf{D}_\mu(t)$ and inverse subordinator $\mathbf{E}_\mu(t)$.

Moreover, the following composition formula is valid:
\begin{align*}
f(x,t)&=\int_0^\infty q_\mu(s,t) v(x,s)\,ds = \int_0^\infty q_\mu(s,t) \int_0^\infty p_\mu(\tau,s) u(x,\tau)\,d\tau\,ds \\
&=\int_0^\infty \left( \int_0^\infty q_\mu(s,t) p_\mu(\tau,s)\,ds \right) u(x,\tau)\,d\tau.
\end{align*}



The significance of \eqref{Section-4-subordination-expression} is that, by using the symbol $\mu(\lambda)$ and $\mathbf{D}_\mu(t), \mathbf{E}_\mu(t)$ derived from the relativistic Schr\"odinger operator, we obtain subordination relations between normal diffusion \eqref{Section-4-CDE} and two relativistic diffusion equations.
In other words, by leveraging the duality between the subordinator and the inverse subordinator,
we establish a duality between the telegraph equation \eqref{Section-4-CTE} and the spatially nonlocal diffusion equation \eqref{Section-4-CNLE}, in which normal diffusion \eqref{Section-4-CDE} plays the role of an anchor.




\vspace{0.5cm}







Using the standard harmonic extension technique \cite{Caffarelli2007CPDE,Kwasnicki2024}, we may convert a nonlocal (in space) operator into a local one.
We will use $\Delta_x f(x,t) (x\in \mathbb{R}^n, t\geq0)$ to denote the Laplace operator with respect to $x$ in this subsection.

\begin{lem}
Suppose
\begin{equation}\label{Section-4-EE}
\begin{cases}
    \frac{\partial^2 f}{\partial t^2}(x,t)-2a\frac{\partial f}{\partial t}(x,t)+c^2\Delta_x f(x,t)=0, & x\in\mathbb{R}^n,\ t>0,\\
    f(x,0)=f_0(x),\quad \lim_{t\to\infty} f(x,t)=0.
\end{cases}
\tag{EE}
\end{equation}
Then
\[
\lim_{t\to 0^+} \frac{\partial f}{\partial t}(x,t) = \bigl(a-\sqrt{a^2-\Delta_x}\bigr)f_0(x).
\]
\end{lem}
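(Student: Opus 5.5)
The plan is to solve \eqref{Section-4-EE} explicitly by Fourier transform in $x$ and then read off the normal derivative at $t=0$. I will work with $c=1$, which is how the stated limit is written; for general $c$ the same argument gives $(a-\sqrt{a^2-c^2\Delta_x})f_0$. Write $\hat f(\xi,t)=\mathcal{F}(f(\cdot,t))(\xi)$. Then \eqref{Section-4-EE} becomes, for each fixed $\xi\in\mathbb{R}^n$, the constant-coefficient ODE
\[
\hat f''(\xi,t)-2a\hat f'(\xi,t)-c^2\|\xi\|^2\hat f(\xi,t)=0,\qquad \hat f(\xi,0)=\hat f_0(\xi).
\]
Its characteristic roots are $r_\pm(\xi)=a\pm\sqrt{a^2+c^2\|\xi\|^2}$. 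Here $r_+(\xi)\ge 2a>0$, while $r_-(\xi)<0$ for $\xi\neq0$.

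The boundary condition $\lim_{t\to\infty}f=0$, imposed mode by mode, kills the growing exponential $e^{tr_+}$. So $\hat f(\xi,t)=e^{tr_-(\xi)}\hat f_0(\xi)$, that is,
\[
f(\cdot,t)=e^{t(a-\sqrt{a^2-c^2\Delta_x})}f_0 .
\]
This is exactly the semigroup solving \eqref{Section-4-CNLE}. Differentiating at $t=0$ gives
\[
\partial_t\hat f(\xi,0^+)=\bigl(a-\sqrt{a^2+c^2\|\xi\|^2}\bigr)\hat f_0(\xi),
\]
and inverting the Fourier transform yields the claim. The limit can be justified in $L^2$, or pointwise for $f_0$ in a Schwartz class, using dominated convergence. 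The justification rests on the bound $|r_-(\xi)e^{tr_-(\xi)}\hat f_0(\xi)|\le |r_-(\xi)||\hat f_0(\xi)|\le c\|\xi\|\,|\hat f_0(\xi)|$.

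As a cross-check, and as a more intrinsic route in the spirit of Section 4, I would also show that $f(x,t)=\int_0^\infty q_\mu(s,t)v(x,s)\,ds$ solves \eqref{Section-4-EE}. By Proposition \ref{Section-2-prop-q-equation},
\[
\partial_t^2f-2a\partial_tf=\int_0^\infty \partial_sq_\mu(s,t)\,v(x,s)\,ds .
\]
Integrating by parts in $s$ turns this into $-\int_0^\infty q_\mu\,\partial_sv\,ds=-c^2\Delta_xf$. The boundary terms vanish because $q_\mu(s,t)\to0$ as $s\to0^+$ (since $e^{-t^2/4s}$ dominates) and as $s\to\infty$. The initial derivative is then the generator of the subordinate semigroup, namely $-\mu(-c^2\Delta_x)$, computed through \eqref{Bernstein functional calculus}.

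The main obstacle is the uniqueness and selection step: why the condition at infinity forces the decaying branch. The zero mode $\xi=0$ is degenerate, since there $r_-=0$ and $r_+=2a$. Consequently, $\lim_{t\to\infty}f=0$ cannot literally hold unless $\hat f_0(0)=0$. I would handle this by interpreting the condition at infinity as boundedness in $t$, as in the Caffarelli–Silvestre extension. Boundedness still excludes the $e^{tr_+}$ branch for every $\xi$, including $\xi=0$. In any case the behaviour of a single mode on a null set of $\xi$ does not affect the $L^2$ statement. With that interpretation, uniqueness follows directly from the two-dimensional solution space of each ODE.
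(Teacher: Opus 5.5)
Your proposal is correct and is essentially the paper's proof. Both take the Fourier transform in $x$, keep the root $a-\sqrt{a^2+c^2|\xi|^2}$ of the characteristic equation, differentiate at $t=0$ and invert; your dominated-convergence justification, explicit tracking of $c$, and subordination cross-check (the content of the paper's subsequent Proposition) add rigor the paper leaves out.

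One precision on your zero-mode remark: the degeneracy at $\xi=0$ only breaks the per-mode condition $\lim_{t\to\infty}\widehat{f}(\xi,t)=0$ written on the Fourier side. The physical-space condition $\lim_{t\to\infty}f(x,t)=0$ in the statement does hold for, e.g., Schwartz $f_0$ with $\widehat{f_0}(0)\neq 0$, by dominated convergence, since $e^{t(a-\sqrt{a^2+c^2|\xi|^2})}\to 0$ off the null set $\{\xi=0\}$. So your boundedness reading is a convenient way to select the decaying branch, not a necessary repair.
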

\begin{proof}
See \cite{Baeumer2010} for the motivation and details.
We formulate the computation in a standard harmonic extension procedure. Take Fourier transform with respect to $x$,
\begin{equation*}
\begin{cases}
    \frac{\partial^2}{\partial t^2}\widehat{f}(\xi,t)-2a\frac{\partial}{\partial t}\widehat{f}(\xi,t)-|\xi|^2\widehat{f}(\xi,t)=0, & \xi\in\mathbb{R}^n,\ t>0,\\
    \widehat{f}(\xi,0)=\widehat{f_0}(\xi),\quad \lim_{t\to\infty}\widehat{f}(\xi,t)=0.
\end{cases}
\end{equation*}
Then
$$
 \widehat{f}(\xi,t)=e^{t(a-\sqrt{a^2+|\xi|^2})}\widehat{f_0}(\xi).
 $$
Therefore
\begin{align*}
\lim_{t\to 0^+}\frac{\partial}{\partial t}\widehat{f}(\xi,t) &= (a-\sqrt{a^2+|\xi|^2})\lim_{t\to 0^+} e^{t(a-\sqrt{a^2+|\xi|^2})}\widehat{f_0}(\xi)\\
&= (a-\sqrt{a^2+|\xi|^2})\widehat{f_0}(\xi).
\end{align*}
Finally,
$$
\lim\limits_{t\rightarrow 0^+} \frac{\partial f}{\partial t} (x,t)=\left(a-\sqrt{a^2-\Delta_x}\right)f_0(x).
$$

\end{proof}

When $a=0$, the above calculation reduces to the classical extension problem for the fractional Laplacian $-(-\Delta)^{1/2}$ \cite{Caffarelli2007CPDE}.
The harmonic extension technique is an important method in PDEs and Probability,
as it transforms certain nonlocal problems into local problems in higher-dimensional spaces.

\begin{prop}
Let
$$
f(x,t) = \int_0^{+\infty} q_\mu(s,t) \, v(x,s) \, ds, (t>0);\qquad  f(x,0)=u_0(x)
$$
where
\begin{equation*}
v(x,t) = \frac{1}{(4\pi c^2 t)^{n/2}} \int_{\mathbb{R}^n} \exp\left(-\frac{|x-y|^2}{4c^2 t}\right) u_0(y) \, \mathrm{d}y.
\end{equation*}
is the solution of \eqref{Section-4-CDE}. Then $f(x,t)$ satisfies
\begin{equation*}
\begin{aligned}
\begin{cases}
    \frac{\partial^2 f}{\partial t^2}(x,t)-2a\frac{\partial f}{\partial t}(x,t)+c^2\Delta_x f (x,t)= 0,\quad\quad x\in\mathbb{R}^n,  t>0; \\
    f(x,0) = u_0(x).
\end{cases}
\end{aligned}
\end{equation*}

\end{prop}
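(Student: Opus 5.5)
The plan is to differentiate under the integral sign and combine the PDE for $q_\mu$ from Proposition \ref{Section-2-prop-q-equation} with the heat equation for $v$, moving derivatives from the kernel onto $v$ by one integration by parts in $s$.

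\emph{Step 1: the equation.} For $t>0$ we have, formally,
\[
\partial_t^2 f-2a\,\partial_t f=\int_0^\infty \bigl(\partial_t^2 q_\mu-2a\,\partial_t q_\mu\bigr)(s,t)\,v(x,s)\,ds .
\]
By Proposition \ref{Section-2-prop-q-equation} the integrand kernel equals $\partial_s q_\mu(s,t)$, so
\[
\partial_t^2 f-2a\,\partial_t f=\int_0^\infty \partial_s q_\mu(s,t)\,v(x,s)\,ds .
\]
Integrating by parts in $s$ gives
\[
\bigl[q_\mu(s,t)v(x,s)\bigr]_{s=0}^{s=\infty}-\int_0^\infty q_\mu(s,t)\,\partial_s v(x,s)\,ds .
\]
Since $\partial_s v=c^2\Delta_x v$, the last integral is $c^2\Delta_x f$, provided $\Delta_x$ can be taken outside the integral. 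This yields the stated equation.

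\emph{Step 2: justification.} This is the main technical step.

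\begin{itemize}
\item \emph{Boundary terms.} The explicit formula $q_\mu(s,t)=\frac{t s^{-3/2}}{2\sqrt\pi}e^{at-a^2s-t^2/(4s)}$ shows that, for fixed $t>0$, $q_\mu$ vanishes faster than any power as $s\to0^+$ because of the factor $e^{-t^2/(4s)}$. As $s\to\infty$ it decays at least like $s^{-3/2}$, exponentially if $a>0$. Assuming $u_0$ bounded (say bounded continuous), $v$ is bounded, so both boundary terms vanish.
\item \emph{Differentiation under the integral.} Locally uniformly in $t\in[t_0,t_1]\subset(0,\infty)$, the derivatives $\partial_t q_\mu$ and $\partial_t^2 q_\mu$ are $q_\mu$ times rational functions of $s,t$. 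They are dominated by an integrable function of $s$ of the form $C s^{-k}e^{-t_0^2/(4s)}(1+s)^{m}s^{-3/2}e^{-a^2 s}$. When $a=0$ some care is needed at infinity, but the derivatives still decay like $s^{-3/2}$, which suffices.
\item \emph{Moving $\Delta_x$ outside.} I would use the heat-kernel bound $|\partial_s v(x,s)|\le C\|u_0\|_\infty/s$, which is integrable against $q_\mu$ near $0$ thanks to the exponential factor, and near infinity thanks to $s^{-3/2}$.
\end{itemize}

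\emph{Step 3: initial condition.} The condition $f(x,0)=u_0(x)$ holds by definition. To see that $f$ is continuous at $t=0$, note that $q_\mu(\cdot,t)$ is the law of $\mathbf D_\mu(t)\to0$ in probability as $t\to0^+$. Hence $f(x,t)=\mathbb E\,v(x,\mathbf D_\mu(t))\to u_0(x)$ by continuity of $v$ at $s=0$ and boundedness.

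Alternatively, for a cleaner check one can take the Fourier transform in $x$. Then $\hat f(\xi,t)=\int_0^\infty q_\mu(s,t)e^{-c^2|\xi|^2s}ds\,\hat u_0=e^{-t\mu(c^2|\xi|^2)}\hat u_0$, and one verifies $\mu^2+2a\mu=c^2|\xi|^2$, so that $\hat f$ solves $\hat f''-2a\hat f'-c^2|\xi|^2\hat f=0$. This matches the previous Lemma.
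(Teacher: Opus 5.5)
Your proposal is correct and is essentially the paper's proof: differentiate under the integral in $t$, replace $\partial_t^2 q_\mu - 2a\,\partial_t q_\mu$ by $\partial_s q_\mu$ using the PDE for $q_\mu$ from Section 2, and integrate by parts in $s$ using $\partial_s v = c^2\Delta_x v$. Your explicit treatment of the boundary terms and of dominated convergence is a genuine improvement, since the paper covers these only by saying the integrand is ``sufficiently regular''. One small fix: drop the factor $(1+s)^m$ from your dominating function. The rational factors multiplying $q_\mu$ are bounded as $s\to\infty$, whereas a growing factor would destroy integrability against $s^{-3/2}$ when $a=0$.
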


\begin{proof}
Since the integration domain is independent of $t$ and the integrand  is sufficiently regular, differentiation and integration can be interchanged:
\[
\frac{\partial f}{\partial t} = \int_0^\infty (\partial_t q_\mu) v \, ds, \quad
\frac{\partial^2 f}{\partial t^2} = \int_0^\infty (\partial_t^2 q_\mu) v \, ds, \quad
\Delta_x f = \int_0^\infty q_\mu (\Delta_x v) \, ds.
\]
Therefore
\begin{align*}
&\frac{\partial^2 f}{\partial t^2}(x,t)-2a\frac{\partial f}{\partial t}(x,t)+c^2\Delta_x f (x,t) \\
=&\int_0^\infty (\partial_t^2 q_\mu) v \, ds-2a\int_0^\infty (\partial_t q_\mu) v \, ds+c^2\int_0^\infty q_\mu (\Delta_x v) \, ds\\
=&\int_0^\infty (\partial_t^2 q_\mu) v \, ds-2a\int_0^\infty (\partial_t q_\mu) v \, ds+\int_0^\infty q_\mu \partial_s v \, ds\\
=&\int_0^\infty (\partial_t^2 q_\mu) v \, ds-2a\int_0^\infty (\partial_t q_\mu) v \, ds-\int_0^\infty (\partial_sq_\mu)  v \, ds\\
=& \int_0^\infty \left( \frac{\partial^2 q_\mu}{\partial t^2} - 2a \frac{\partial q_\mu}{\partial t} - \frac{\partial q_\mu}{\partial s} \right) v \, ds=0
\end{align*}
where we have used Proposition \ref{Section-2-prop-q-equation}.

\end{proof}

\begin{rem}
Based on the  harmonic extension for the relativistic Schr\"odinger operator, we can interpret the dual relativistic diffusions \eqref{Section-4-CTE} and \eqref{Section-4-CNLE}
in another form.
Consider the hyperbolic equation \eqref{Section-4-CTE}, parabolic equation \eqref{Section-4-CDE} and elliptic equation
\begin{equation*}
\begin{aligned}
\begin{cases}
    \frac{\partial^2 f}{\partial t^2}(x,t)-2a\frac{\partial f}{\partial t}(x,t)+c^2\Delta_x f (x,t)= 0,\quad\quad x\in\mathbb{R}^n,  t>0; \\
    f(x,0) = u_0(x).
\end{cases}
\end{aligned}
\end{equation*}
Furthermore, we impose appropriate decay restrictions on $f(x,t)$ as $t\rightarrow\infty$. Then we have
\begin{align*}
v(x,t) &= \int_0^\infty p_\mu(s,t) u(x,s)\,ds,\\
f(x,t) &= \int_0^\infty q_\mu(s,t) v(x,s)\,ds.
\end{align*}

In the case $a=0$, some authors have already explored these properties, which are collectively called transmutation theory,
and even extended them to more abstract and nonhomogeneous forms \cite{Carroll1979book, Gzyl2002book,Hersh1975LNM}.

\end{rem}

\vspace{1cm}

\subsection{Duality: asymptotic behaviors}

We will clarify partially the meanings of relativity and duality of \eqref{Section-4-NLE} and \eqref{Section-4-TE}
through examining their asymptotic behaviors
as $c\rightarrow \infty, t\rightarrow\infty, m\rightarrow 0,  t\rightarrow 0$. See \cite{Alonso2025PRE,Baeumer2010}.
Note that we focus only on the relativity and duality of the two equations, regardless of the precise function space and convergence rate.

\begin{table}[h!]
\centering
\renewcommand{\arraystretch}{2.0} 
\begin{tabular}{|p{0.15\textwidth}|p{0.3\textwidth}|p{0.45\textwidth}|}
\hline
\textbf{Equations} & $\displaystyle \frac{\partial^2 u}{\partial t^2}+\frac{2mc^2}{\hbar}\frac{\partial u}{\partial t}=c^2\Delta u$ & $\displaystyle \hbar\frac{\partial f}{\partial t}=[mc^2-\sqrt{m^2c^4-\hbar^2c^2\Delta}]f$ \\
\hline
$c\rightarrow \infty$ & $\displaystyle \frac{\partial u}{\partial t}=\frac{\hbar}{2m}\Delta u$ & $\displaystyle \frac{\partial f}{\partial t}=\frac{\hbar}{2m}\Delta f$ \\
\hline
$t\rightarrow \infty$ & $\displaystyle \frac{\partial u}{\partial t}=\frac{\hbar}{2m}\Delta u$ & $\displaystyle \frac{\partial f}{\partial t}=\frac{\hbar}{2m}\Delta f$ \\
\hline
$m\rightarrow 0$ & $\displaystyle \frac{\partial^2 u}{\partial t^2}=c^2\Delta u$ & $\displaystyle \frac{\partial f}{\partial t}=-c(-\Delta)^{1/2}f$ \\
\hline
$t\rightarrow 0$ & $\displaystyle \frac{\partial^2 u}{\partial t^2}=c^2\Delta u$ & $\displaystyle \frac{\partial f}{\partial t}=-c(-\Delta)^{1/2}f$ \\
\hline
\end{tabular}
\caption{(The equation $\frac{\partial^2 u}{\partial t^2}+\frac{2mc^2}{\hbar}\frac{\partial u}{\partial t}=c^2\Delta u$ is  \eqref{Section-4-TE},
while $\hbar\frac{\partial f}{\partial t}=[mc^2-\sqrt{m^2c^4-\hbar^2c^2\Delta}]f$ is \eqref{Section-4-NLE}.  We give the physical interpretation in the main body. ) }
\end{table}

\vspace{1cm}

\begin{itemize}
\item  Case 1: $c\rightarrow \infty$.

When we construct a relativistic model, a question naturally arises:
How can the classical diffusion equation be obtained in the $c\rightarrow\infty$ limit?
We can find that both  \eqref{Section-4-NLE} and \eqref{Section-4-TE}  converge to the normal diffusion equation as $c\rightarrow\infty$.

\item  Case 2: $t\rightarrow \infty$.

Over sufficiently long durations, transient effects from initial conditions dissipate completely.
Particles undergo numerous collisions, randomizing their motion.
The transport process becomes dominated by statistical averaging, manifesting as memoryless Markovian diffusion.
Mathematically, both \eqref{Section-4-NLE} and \eqref{Section-4-TE} converge to the normal diffusion equation as $t\rightarrow\infty$.

\item Case 3: $m\rightarrow 0$. We give the interpretation separately for \eqref{Section-4-TE} and \eqref{Section-4-NLE}.


When the mass $m\rightarrow 0$ in the telegrapher equation, it physically signifies the transition to a massless regime, such as that of photons.
In this limit, the inherent inertia and dissipative damping effects associated with massive particles are entirely removed.
Therefore, \eqref{Section-4-TE} approaches the classical wave equation.

In the massless limit, the operator $mc^2 - \sqrt{m^2c^4 - \hbar^2c^2\Delta}$ simplifies to $-c\hbar(-\Delta)^{1/2}$, yielding again $\partial_t f = -c(-\Delta)^{1/2} f$. This equation governs free propagation of massless relativistic particles. In contrast to the classical wave equation obtained from (\ref{Section-4-TE}), here we obtain a fractional diffusion equation with non-Gaussian, heavy-tailed solutions (e.g., Cauchy process in 1D). The characteristic speed remains $c$, with ballistic spreading $\langle |x| \rangle \sim ct$, reflecting persistent memory and nonlocal correlations even without collisions.

\item Case 4: $t\rightarrow 0$.

When the observation time $t$ is extremely short, the time-energy uncertainty relation $\Delta E \sim \hbar/t$ implies an energy uncertainty far exceeding the rest energy $mc^2$. In this regime, the fixed mass scale becomes physically irrelevant, and the system's dynamics coincide with those of the massless limit $m\to 0$.
The resulting limit equations are therefore the same as discussed in Case 3,  and we omit a separate explanation.

\end{itemize}




\vspace{0.5cm}

For further comparisons between \eqref{Section-4-NLE} and \eqref{Section-4-TE} from other physical perspectives, see \cite{Muniz2015PRD}.
We also mention that the $c\rightarrow \infty$ and $t\rightarrow \infty$ limit of \eqref{Section-4-TE} have been introduced in Remark \ref{section-3-rem-1-2}.
\vspace{1cm}

\section{Generalized dual relativistic diffusion: general subordination}

Now we extend the concept of dual relativistic diffusion from two aspects:
(1) Replace  $\mu(\lambda)=-a+\sqrt{a^2+\lambda}$ with a general Laplace exponent $\varphi(\cdot)$;
(2) Replace the diffusion semigroup $e^{t\Delta}$ with a general $C_0$ semigroup $T(t)$.

\begin{defn}\label{Section-5-definition-1}
Let $T(t)$ be a $C_0$ semigroup of operators on a Banach space $X$, and let $\varphi(\cdot)$ be a Laplace exponent,
Moreover, suppose $Q_\varphi(s,t)$ and $P_\varphi(s,t)$ are the distributions of the subordinator and the inverse subordinator with Laplace exponent $\varphi(\cdot)$,
respectively.
Let
\begin{equation}\label{Section-5-definition-2}
\begin{aligned}
\begin{cases}
S_\varphi(t)z=\int_0^\infty T(s)z Q_\varphi(ds,t), \quad t>0,z\in X;\\
S_\varphi(0)=I
\end{cases}
\end{aligned}
\end{equation}
and $R_\varphi(t)z$ satisfy
\begin{equation}\label{Section-5-definition-3}
\begin{aligned}
\begin{cases}
T(t)z=\int_0^\infty R_\varphi(s)z P_\varphi(ds,t), \quad t>0, z\in X;\\
R_\varphi(0)=I.
\end{cases}
\end{aligned}
\end{equation}
Then $R_\varphi(t)$ and $S_\varphi(t)$ are called
{\it generalized dual relativistic diffusion families of operators} centered at $T(t)$ with respect to the Laplace exponent $\varphi(\cdot)$.
Moreover, $R_\varphi(t)$ is called the time-dual of $S_\varphi(t)$, while  $S_\varphi(t)$ is called the space-dual of $R_\varphi(t)$.

\end{defn}

\vspace{0.2cm}

\begin{rem}

Generally, $S_\varphi(t)$ has better regularity and conservation properties than $R_\varphi(t)$. Moreover,
$S_\varphi(t)$ possesses semigroup property, while $R_\varphi(t)$ does not. On the other hand,
$R_\varphi(t)$ has better ``relativistic'' property.


\end{rem}

\vspace{0.2cm}

\begin{rem}\label{Section-5-rem-1}

 Let $A$ be the infinitesimal generator of a  $C_0$ semigroup $T(t)$ and $z\in D(A)$. Then  $v(t)=T(t)z$ is the unique strong solution of
\begin{equation}\label{section-5-v}
\begin{aligned}
\begin{cases}
v'(t)=Av(t), \quad t>0,\\
v(0)=z.
\end{cases}
\end{aligned}
\end{equation}

 Furthermore, suppose $T(t)$ is a bounded $C_0$ semigroup generated by  $A$, then $-\varphi(-A)$ is well-defined.
Therefore $f(t)=S_\varphi(t)z$ defined by \eqref{Section-5-definition-2}   is the unique strong solution of
\begin{equation}\label{section-5-f}
\begin{aligned}
\begin{cases}
f'(t)=-\varphi(-A)f(t), \quad t>0,\\
f(0)=z.
\end{cases}
\end{aligned}
\end{equation}
See Subsection 2.2.

 Given $\varphi(\cdot)$ and $A$, according to the above definition,  we may search for an equation
\begin{equation}\label{section-5-u}
\begin{aligned}
\begin{cases}
    \Phi_t u(t)=Au(t),\quad\quad t>0, \\
    u(0)=z,\quad u'(0)=0,\quad u''(0)=0,\ \dotsc
\end{cases}
\end{aligned}
\end{equation}
such that
$$
v(t)=\int_0^\infty p_\varphi(s,t)u(s)ds, \quad t>0.
$$
In general, the operator $\Phi_t$ is a linear integro-differential operator acting on $u(t)$ with respect to the time variable $t$.
Therefore, (\ref{section-5-u}) is a  nonlocal-in-time equation.
We may regard (\ref{section-5-f}) and (\ref{section-5-u}) as  {\it  generalized dual relativistic diffusion equations} with respect to Laplace exponent $\varphi(\cdot)$.
Moreover, (\ref{section-5-u}) is the time-dual of (\ref{section-5-f}), while  (\ref{section-5-f}) is the space-dual of (\ref{section-5-u}).

If $\varphi(\lambda)=\mu(\lambda)=-a+\sqrt{a^2+\lambda}$, $A=c\Delta$, then \eqref{Section-5-definition-2} and \eqref{Section-5-definition-3}
reduce to \eqref{Section-4-subordination-expression},
while \eqref{section-5-v}, \eqref{section-5-f} and \eqref{section-5-u} reduce to
\eqref{Section-4-CDE}, \eqref{Section-4-CNLE} and \eqref{Section-4-CTE} respectively.

\end{rem}

\vspace{0.5cm}

Let $\varphi=\varphi(\cdot)\sim(0,b,\sigma)$ be a Laplace exponent satisfying $\sigma(0,\infty)=\infty$, and let $\varphi$  be invertible.
Then the inverse function $\lambda=\lambda(\varphi)$ is infinitely differentiable.
Let $z=v(0)=u(0)$.
According to Definition \ref{Section-5-definition-1} and Remark \ref{Section-5-rem-1},
we give a general step-by-step procedure (with proof) to look for the linear operator $\Phi_t$ in the nonlocal-in-time equation \eqref{section-5-u} such that
$$
v(t)=\int_0^\infty p_\varphi(s,t)u(s)ds,\qquad t>0.
$$

\textbf{Three steps to produce $\Phi_t$:}





{\it Step 1:} Since
$$
v(t)=e^{tA}z=\int_0^\infty p_\varphi(s,t)u(s)ds, \quad t>0,
$$
we have
\begin{eqnarray*}
(\lambda-A)^{-1}z&=&\hat{v}(\lambda)=\int_0^\infty e^{-\lambda t}\left(\int_0^\infty p_\varphi(s,t)u(s)ds\right)dt \\
&=&\int_0^\infty\left(\int_0^\infty  e^{-\lambda t}p_\varphi(s,t)dt\right)u(s)ds \\
&=&\int_0^\infty \frac{\varphi(\lambda)}{\lambda} e^{-s \varphi(\lambda)}\cdot u(s)ds
\end{eqnarray*}

Then
$$
\int_0^\infty  e^{-s \varphi(\lambda)} u(s)ds=\frac{\lambda}{\varphi(\lambda)}(\lambda-A)^{-1}z.
$$
Combining  $
\varphi(\lambda)=\varphi, \lambda=\lambda(\varphi)$, we have
$$
\int_0^\infty  e^{-s \varphi} u(s)ds=\frac{\lambda(\varphi)}{\varphi}(\lambda(\varphi)-A)^{-1}z.
$$
Consequently,
$$
(\lambda(\varphi)-A)\int_0^\infty  e^{-s \varphi} u(s)ds=\frac{\lambda(\varphi)}{\varphi}z.
$$
Therefore,
\begin{equation}\label{Section-5-procedure-1}
\lambda(\varphi)\int_0^\infty  e^{-s \varphi} u(s)ds-\frac{\lambda(\varphi)}{\varphi}z=A\int_0^\infty  e^{-s \varphi} u(s)ds.
\end{equation}

{\it Step 2:}

Take the Laplace transform on both sides of the equation $\Phi_t u(t)=Au(t)$, we have
\begin{align}\label{Section-5-procedure-2}
\int_0^\infty  e^{-\varphi t }\Phi_t u(t)dt &= \int_0^\infty  e^{-\varphi t }Au(t)dt \notag \\
&= A\int_0^\infty  e^{-\varphi t }u(t)dt \notag \\
&= \lambda(\varphi)\int_0^\infty  e^{-\varphi t } u(t)dt-\frac{\lambda(\varphi)}{\varphi}z, \quad (using \quad \eqref{Section-5-procedure-1}).
\end{align}

{\it Step 3: } We have several choices depending on $\varphi(\lambda)$. For instance,


(1)  If there exists integrable function $h_1 (t)$, such that
$$
\int_0^\infty  e^{-\varphi t} h_1 (t) dt=\frac{\lambda(\varphi)}{\varphi^2}.
$$
We may arrange that
\begin{eqnarray*}
\int_0^\infty  e^{-\varphi t }\Phi_t u(t)dt&=&\frac{\lambda(\varphi)}{\varphi^2}\left[\varphi^2\int_0^\infty  e^{-\varphi t } u(t)dt-\varphi z\right]\\
&=&\frac{\lambda(\varphi)}{\varphi^2}\cdot \int_0^\infty  e^{-\varphi t } u''(t)dt.
\end{eqnarray*}
Then
$$
\int_0^\infty  e^{-\varphi t}\Phi_t u(t)dt=\int_0^\infty  e^{-\varphi t} h_1 (t) dt \cdot  \int_0^\infty  e^{-\varphi t} u''(t)dt=\int_0^\infty e^{-\varphi t} (h_1\ast u'')(t)dt.
$$
Therefore
$$
\Phi_t u(t)=(h_1\ast u'')(t).
$$

(2) If there exists integrable function $h_2 (t)$, such that
$$
\int_0^\infty  e^{-\varphi t} h_2 (t) dt=\frac{\lambda(\varphi)}{\varphi^3},
$$
We may arrange that
\begin{eqnarray*}
\int_0^\infty  e^{-\varphi t }\Phi_t u(t)dt&=&\frac{\lambda(\varphi)}{\varphi^3}\left[\varphi^3\int_0^\infty  e^{-\varphi t } u(t)dt-\varphi^2 z\right]\\
&=&\frac{\lambda(\varphi)}{\varphi^3}\cdot \int_0^\infty  e^{-\varphi t } u^{(3)}(t)dt.
\end{eqnarray*}
Then
$$
\int_0^\infty  e^{-\varphi t}\Phi_t u(t)dt=\int_0^\infty  e^{-\varphi t} h_2 (t) dt \cdot  \int_0^\infty  e^{-\varphi t} u^{(3)}(t)dt
=\int_0^\infty e^{-\varphi t} (h_2\ast u^{(3)})(t)dt.
$$
Therefore
$$
\Phi_t u(t)=(h_2\ast u^{(3)})(t).
$$

\vspace{1cm}

\begin{example}

Let $\varphi(\lambda)=\lambda^{\alpha} (0<\alpha<1)$, $-A$ is a sectorial operator.
Then $\lambda=\lambda(\varphi)=\varphi^{1/\alpha}$.

Let $T(t)$ be the bounded $C_0$ semigroup generated by $A$. We have

(1) The $C_0$ semigroup generated by $-(-A)^\alpha$ is given by \cite{Limiao2010}
$$
S(t)=\int_0^\infty q_\varphi (s,t) T(s)ds, t>0
$$
where
$$
\widehat{q_\varphi} (\lambda,t)=\int_0^\infty e^{-\lambda s} q_\varphi (s,t)ds =e^{-t \lambda^\alpha}.
$$

(2) If $\frac{1}{2}<\alpha<1$  and  $A$ has an appropriate sectorial angle, we have
$$
\Phi_t u(t)=D_t^{1/\alpha}u(t)=\int_0^t\frac{(t-s)^{1-\frac{1}{\alpha}}}{\Gamma(2-\frac{1}{\alpha})} u''(s)ds
$$
where $D_t^{1/\alpha}$ denotes the Caputo time-fractional derivative. Moreover,
$$
T(t)z=\int_0^\infty p_\varphi(s,t) u(s)ds=\int_0^\infty p_\varphi(s,t) R_\varphi(s)zds, t>0
$$
where
$$
\widehat{p_\varphi} (s,\lambda)=\int_0^\infty e^{-\lambda t} p_\varphi (s,t)dt =\lambda^{\alpha-1}e^{-s\lambda^\alpha},
\widehat{p_\varphi} (\lambda,t)=\int_0^\infty e^{-\lambda s} p_\varphi (s,t)ds =E_\alpha(-\lambda t^\alpha)
$$
and $R_\varphi(t)$ is the $1/\alpha$ times fractional resolvent \cite{Limiao2010}.
See \cite[Theorem,3.1,Corollary 3.3]{Limiao2010} for  integral representation for $q_\varphi (s,t)$, $p_\varphi (s,t)$ and more related results.

\end{example}

\vspace{1cm}

\begin{example}
Consider the relativistic stable operator \cite{Applebaum2009,ChenZhenQing2012AOP}
\begin{equation}\label{Section-5-relativistic-stable-1}
\dfrac{\partial f}{\partial t}(x,t)=[a-(a^{2/\beta}-\Delta_x)^{\beta/2}]f(x,t),   \quad \beta\in(0,2).
\end{equation}

Let $\mathbf{D}_\rho(t)$ be the subordinator with Laplace exponent
$$
\rho(\lambda)=-a+(\lambda+a^{2/\beta})^{\beta/2}, \quad\quad \beta\in(0,2).
$$
It is easy to see that $\mathbf{D}_\rho(t)$ has probability density $q_\rho(x,t)$. We have
$$
\mathbb{E}(e^{-\lambda \mathbf{D}_\rho(t)})=\int_0^\infty e^{-\lambda s} q_\rho(s,t)ds=e^{-t[-a+(\lambda+a^{2/\beta})^{\beta/2}]}.
$$
Then the relativistic stable process governed by (\ref{Section-5-relativistic-stable-1}) can be regarded as $\mathbf{B}(\mathbf{D}_\rho(t))$
where $\mathbf{B}(t)$ and $\mathbf{D}_\rho(t)$ are independent.

Let $\mathbf{E}_\rho(t)$ be the inverse of $\mathbf{D}_\rho(t)$. $p_\rho(s,t)$ is the probability density of $\mathbf{E}_\rho(t)$, and
$$
\int_0^\infty e^{-\lambda t}p_\rho(s,t)dt=\frac{\rho(\lambda)}{\lambda}e^{-s \rho(\lambda)}.
$$

We will search for the corresponding linear operator $\Phi_t$ in (\ref{section-5-u}) as follows.
Firstly, it follows from $\rho(\lambda)=-a+(\lambda+a^{2/\beta})^{\beta/2}$ that $\lambda=(\rho+a)^{2/\beta}-a^{2/\beta}.$
Then, due to \eqref{Section-5-procedure-2}, we have
\begin{equation}\label{20260417}
\widehat{\Phi_t u}(x,\rho)=\left[(\rho+a)^{2/\beta}-a^{2/\beta}\right]\hat{u}(x,\rho)-\frac{(\rho+a)^{2/\beta}-a^{2/\beta}}{\rho}u(x,0)=\Delta \hat{u}(x,\rho).
\end{equation}

Next, we will consider two special cases $\beta=1, 2/3$.

Case 1: $\beta=1$.  (\ref{20260417}) is reduced to
$$
[\rho^2 \hat{u}(x,\rho)-\rho u(x,0)] +2a [\rho \hat{u}(\rho)-u(x,0)]=\Delta \hat{u}(x,\rho),
$$
then, by taking the Laplace inversion, we obtain the telegraph equation
$$
\partial_t^2u(x,t)+2a\partial_tu(x,t)=\Delta_x u(x,t).
$$

Case 2: $\beta=2/3$.  \eqref{20260417} is reduced to
$$
[\rho^3 \hat{u}(x,\rho)-\rho^2 u(x,0)] +3a[\rho^2 \hat{u}(x,\rho)-\rho u(x,0)] +3a^2 [\rho \hat{u}(x,\rho)-u(x,0)]=\Delta \hat{u}(x,\rho),
$$
then we have
\begin{equation}\label{20260602}
\partial_t^3u(x,t)+3a\partial_t^2u(x,t)+3a^2\partial_tu(x,t)=\Delta_x u(x,t).
\end{equation}
In physics, researchers have already used a different third-order equation to simulate relativistic diffusion \cite{Kampen1970}.

Note that $\beta$ should be restricted to $[1,2)$ to obtain a well-posed Cauchy problem
$\Phi_t u(x,t)=\Delta_x u(x,t)$, based on the theory of abstract higher-order Cauchy problem \cite{Fattorini1983book}.
When $\beta\in (1,2)$, $\Phi_t $ is a integro-differential operator.
We may adopt some weak well-posedness  for general $\Phi_t u(x,t)=\Delta_x u(x, t)$ if $\beta\in (0,1)$.

\end{example}

\vspace{0.5cm}

The following two examples are dedicated to exploring whether a general higher-order Cauchy problems possess the structure of generalized dual relativistic diffusion.

\begin{example}
Consider
\begin{equation}\label{general two term equation}
\begin{aligned}
\begin{cases}
    \Phi_tu(t)=au^{(m+1)}(t)+b  u^{(m)}(t)=Au(t),\quad\quad t>0, \\
    u(0) = u_0, \quad u'(0)=u''(0)=\cdots =u^{(m)}(0)=0
\end{cases}
\end{aligned}
\end{equation}
where $a,b>0, m=1,2,\cdots$.

Similar to the preceding computation, let
$$
\lambda=\lambda(\varphi)= a\varphi^{m+1}+b\varphi^{m}, \quad \varphi>0.
$$
This equation has a unique positive solution
$$
\varphi=\varphi(\lambda), \quad \lambda>0.
$$
For $\varphi>0$, we have
\begin{equation*}
\varphi'(\lambda)= \frac{1}{a(m+1)\varphi^m+bm\varphi^{m-1}}=\frac{1}{\varphi^{m-1}}\cdot\frac{1}{a(m+1)\varphi+bm}=f_1(\varphi)f_2(\varphi)
\end{equation*}
in which both $f_1(\cdot), f_2(\cdot)$ are $\mathcal{CMF}$. Therefore $\varphi(\lambda)$ is $\mathcal{BF}$ with $\varphi(0+)=0$
due to  Lemma \ref{Section 2-Bernstein-new-result}.

Let $p_\varphi(s,t)$ be the probability density of an inverse subordinator with Laplace exponent $\varphi(\cdot)$.
Then
$$
v(t)=\int_0^\infty p_\varphi(s,t)u(s)ds
$$
where $v(t)$ satisfies
\begin{equation*}
\begin{aligned}
\begin{cases}
v'(t)=Av(t), \quad t>0\\
v(0)=u_0.
\end{cases}
\end{aligned}
\end{equation*}

\end{example}

\vspace{0.5cm}

\begin{example}
Consider
\begin{equation}\label{general three order equation}
\begin{aligned}
\begin{cases}
     \Phi_tu(t)=u'''(t)+a u''(t)+b u'(t)=Au(t),\quad\quad t>0, \\
    u(0) = u_0, \quad u'(0)=u''(0)=0.
\end{cases}
\end{aligned}
\end{equation}
in which $a,b\geq0$. Moreover
\begin{equation*}
\begin{aligned}
\begin{cases}
    v'(t)=Av(t),\quad\quad t>0, \\
    v(0) = u_0.
\end{cases}
\end{aligned}
\end{equation*}

Similar to the Example 5.6, let
$$
\lambda=\lambda(\varphi)=\varphi^3+a \varphi^2+b \varphi.
$$
This equation has a unique positive solution
$$
\varphi=\varphi(\lambda), \quad \lambda>0.
$$
The key point is to examine whether $\varphi(\cdot)$ is $\mathcal{BF}$ or not.

 If $a^2= 3b$ (see \eqref{20260602})or $b=0$ (see \eqref{general two term equation}), it is evident that $\varphi(\cdot)$ is  $\mathcal{BF}$. Therefore we can build
$$
v(t)=\int_0^\infty p_\varphi(s,t)u(s)ds
$$
where $p_\varphi(s,t)$ is the probability density of an inverse subordinator with Laplace exponent $\varphi(\cdot)$.

If $a=0,b=1$, we have
\begin{equation*}
\varphi'''(\lambda)=\frac{6(15\varphi^2-1)}{(3\varphi^2+1)^5},
\end{equation*}
hence $\varphi(\lambda)$ is not $\mathcal{BF}$. Therefore,
(\ref{general three order equation}) does not have the structure of generalized dual relativistic diffusion in this case.

\end{example}



\vspace{0.5cm}

\begin{rem}

We propose an open problem that may be of independent interest. Suppose
$$
\lambda = \lambda(\varphi) = \varphi^n + a_1 \varphi^{n-1} + \dots + a_{n-1}\varphi, \qquad \lambda>0,\ \varphi>0,
$$
where all $a_j \ge 0$. Then the inverse function $\varphi = \varphi(\lambda)$ exists. What conditions must the coefficients $a_j$ ($1\le j\le n-1$) satisfy for $\varphi = \varphi(\lambda)$ to be a Bernstein function?


\end{rem}







\end{document}